\def\E{\mathbb E}
\def\R{\mathbb R}
\def\N{\mathbb N}
\def\P{\mathbb P}
\def\Z{\mathbb Z}
\def\bfB{\mathbf B}
\def\cF{{\mathcal F}}
\newtheorem{Def}{Definition}
\newtheorem{Prop}{Proposition}
\newtheorem{lem}{Lemma}
\newtheorem{Theo}{Theorem}
\begin{document}

\title[Stable QML estimation of the EGARCH(1,1)]{Continuous invertibility and stable QML estimation of the EGARCH(1,1) model}

\author[O. Wintenberger]{Olivier Wintenberger}

\address{Olivier Wintenberger,
            Universit\'e de Paris-Dauphine,
            Centre De Recherche en Math\'ematiques de la D\'ecision
            UMR CNRS 7534,
            Place du Mar\'echal De Lattre De Tassigny,
            75775 Paris Cedex 16, France}
            \email{owintenb@ceremade.dauphine.fr}

\maketitle

\begin{abstract}
We introduce the  notion of continuous  invertibility on a compact set for volatility models driven by a Stochastic Recurrence Equation (SRE). We prove the strong consistency of the Quasi Maximum Likelihood Estimator (QMLE) when the optimization  procedure is done on a   continuously invertible domain. This approach gives for the first time the strong consistency of the QMLE used by Nelson in \cite{nelson:1991} for the EGARCH(1,1) model under explicit but non observable conditions. In practice, we propose to stabilize the QMLE by constraining the optimization procedure to an empirical continuously invertible domain. The new method, called Stable QMLE (SQMLE), is strongly consistent when the observations follow an invertible EGARCH(1,1) model. We also give the asymptotic normality of the SQMLE  under additional minimal assumptions. \end{abstract}

{\em AMS 2000 subject classifications:} Primary 62F12; Secondary 60H25,  62F10,  62M20,  62M10,  91B84.

 {\em Keywords and phrases:} Invertible models,  volatility models, quasi maximum likelihood, strong consistency, asymptotic normality,   exponential GARCH, 
stochastic recurrence equation.

\section{Introduction}\label{sec:intro}

Since the seminal papers \cite{engle:1982,bollerslev:1986}, the General Autoregressive Conditional Heteroskedasticity  (GARCH) type models have been successfully applied to financial time series modeling. One of the stylized facts observed on the data is the asymmetry with respect to (wrt) shocks \cite{cont:2001}: a negative past observation impacts the present volatility more importantly than a positive one. Nelson  introduced in \cite{nelson:1991} the Exponential-GARCH (EGARCH) model that reproduces this asymmetric effect. Not surprisingly, theoretical investigations of EGARCH has attracted lot of attention since then, see for example \cite{he:terasvirta:malmsten:2002,mikosch:rezapour:2012}. However, the properties of the Quasi Maximum Likelihood Estimator (QMLE)  used empirically in \cite{nelson:1991} was not proved except in some degenerate case, see \cite{straumann:mikosch:2006}. We give in this paper some sufficient conditions for the strong consistency and the asymptotic normality of the QMLE in the EGARCH(1,1) model. Our approach of the strong consistency is based on the natural notion of continuous invertibility that we introduce in the general setting of volatility model solutions of a Stochastic Recurrent Equation (SRE).\\

Consider a real valued volatility model of the form $ X_{t} =\sigma_tZ_{t}$ where $\sigma_{t}$ is the volatility and where the innovations $Z_t$ are normalized, centered independent identical distributed (iid) random vectors.  It is assumed that a transformation of the volatility satisfies some parametric SRE  (also called Iterated Random Function): there exist a function $h$  and some  $\psi_{t}$ measurable wrt $Z_t$ such that the following relations 
\begin{equation}\label{eq:gmod}
h(\sigma_{t+1}^2)  = \psi_{t}(h(\sigma_t^2),\theta_0),\qquad \forall t\in \Z
\end{equation} hold. Classical examples are the GARCH(1,1) and  EGARCH(1,1) models:
\begin{align}
\mbox{GARCH(1,1):}& \qquad\sigma_{t+1}^2=\alpha_0+\beta_0 \sigma_{t}^2+\gamma_0 X_{t}^2,\label{eq:garch}\\
\mbox{EGARCH(1,1):}&\qquad  \log(\sigma_{t+1}^2)=\alpha_0+\beta_0 \log(\sigma_{t}^2)+(\gamma_0 Z_{t}+\delta_0|Z_{t}|).\label{eq:egarch}
\end{align}

In practice, the innovations $Z_t$ are not observed. Writing $Z_t= X_t/\sigma_t$ in the expression of $\psi_t$ wrt $Z_t$, we   invert the model, i.e we consider a new SRE driven by a function $\phi_t$  of the observation $X_t$,
\begin{equation}\label{SREinftrue}
h(\sigma_{t+1}^2)=\phi_t(h(\sigma_t^2),\theta_0) ,\qquad t\in\Z.
\end{equation}
 For instance, we obtain from \eqref{eq:garch} the inverted model $\phi_t(x,\theta )=\alpha+\beta x+\gamma X_{t}^2$ for the GARCH(1,1) model. For the EGARCH(1,1) model, we obtain from \eqref{eq:egarch} the inverted model
\begin{equation}\label{eq:egarchinv}
\phi_t(x,\theta)=\alpha+\beta x+(\gamma  X_{t}+\delta |X_{t}|)\exp(-x/2).
\end{equation}
In accordance with the notions of invertibility given in \cite{granger:andersen:1978,tong:1993,straumann:2005,straumann:mikosch:2006}, we will say that the model is invertible if the SRE \eqref{SREinftrue} is stable. Then, as  the functions $\phi_t$ are observed, the volatility is efficiently forecasted by using recursively the relation
\begin{equation}\label{sre}
h_{t+1}=\phi_t(h_t,\theta_0),\qquad t\ge0,
\end{equation}
from an arbitrary initial value $h_0$. Sufficient conditions for the convergence of this SRE are the negativity of a Lyapunov coefficient and the existence of logarithmic moments, see \cite{bougerol:1993}. So the  GARCH(1,1) model is invertible as soon as $0\le \beta_0<1$. The invertibility of the EGARCH(1,1) model is more complicated to assert due to  the exponential function in \eqref{eq:egarchinv}. The recursive relation on $h_{t+1}$ can explode to $-\infty$ for small negative values of $h_t$ and negative values of $\gamma_0  X_{t}+\delta_0 |X_{t}|$. However, assuming that $\delta_0\ge|\gamma_0|$, the relation $\gamma_0  X_{t}+\delta_0 |X_{t}|>0$ holds and conditions for invertibility  of the EGARCH(1,1), denoted hereafter INV($\theta_0)$, are obtained in \cite{straumann:2005,straumann:mikosch:2006}:\\

INV($\theta_0$): $\delta_0\ge |\gamma_0|$ and 
\begin{equation}
\E[\log(\max\{ \beta_0 , 2^{-1}(\gamma_0  X_{0}+\delta_0 |X_{0}|)\exp(-2^{-1}\alpha_0 /(1-\beta_0 ))-\beta_0 \})]<0.\label{eq:condtheta}
\end{equation}
 On the oppposite, Sorokin introduces  in \cite{sorokin:2011} sufficient conditions on $\theta_0$ for the EGARCH(1,1) model to be non-invertible. Then the SRE \eqref{eq:egarchinv} is completely chaotic for any possible choice of the initial value $h_0$ and the volatility forecasting procedure  based on the model \eqref{SREinftrue} is not reliable.\\

In practice, the value $\theta_0=(\alpha_0,\beta_0,\gamma_0,\delta_0)$ of the Data Generating Process (DGP) EGARCH(1,1) is unknown. Nelson proposed in \cite{nelson:1991} to estimate $\theta_0$ with the Quasi Maximum Likelihood Estimator (QMLE). Let us recall the definition of this classical estimator that estimates efficiently many GARCH models, see \cite{berkes:horvath:kokoszka:2003} for the GARCH case and \cite{francq:zakoian:2004} for the ARMA-GARCH case. To construct the QMLE one approximates the volatility using the observed SRE \eqref{sre} at any $\theta$. Assume that the SREs driven by $\phi_t(\cdot,\theta)$ are stable for any $\theta$. Let us consider the functions $\hat g_t(\theta)$ defined for any $\theta$ as the recursive solutions of the SRE 
\begin{equation}\label{SREgen}
\hat g_{t+1}(\theta)=\phi_t(\hat g_t(\theta),\theta),\qquad t\ge0,
\end{equation}
 for some arbitrary initial value $\hat g_0(\theta)$. Assume that $h$ is a bijective function of inverse $\ell>0$. Then $\ell(\hat g_t(\theta_0))$ has a limiting law that coincides with the one of $ \sigma_t^2$. The  Quasi Likelihood (QL) criteria is defined as
\begin{equation}\label{eq:ql}
2 n\hat{L}_{n}(\theta)  =  \sum_{t=1}^{n}\hat{l}_{t}(\theta) 
  =  \sum_{t=1}^{n} X_{t}^{2}/\ell(\hat{g}_{t}(\theta))^{2}+\log(\ell(\hat{g}_{t}(\theta)).
\end{equation}
The associated $M$-estimator is the QMLE $\hat{\theta}_{n}$ defined by optimizing the QL on some compact set $\Theta$
$$
\hat \theta_{n}=\mbox{argmin}_{\theta\in\Theta}\hat{L}_{n}(\theta).
$$
This estimator has been used since the seminal paper of Nelson \cite{nelson:1991} for estimating the EGARCH(1,1) model without any theoretical justification, see for example \cite{brandt:jones:2006}. The inverted EGARCH(1,1) model is driven by the SRE \eqref{SREgen} that expresses as (denoting $\ell(\hat{g}_{t})=\hat\sigma_{t}^2$)
\begin{equation}\label{egarchinv}
\log(\hat\sigma_{t+1}^2(\theta))=\alpha+\beta \log(\hat\sigma_{t}^2(\theta))+(\gamma  X_{t}+\delta |X_{t}|)\exp(-\log(\hat\sigma_{t}^2(\theta))/2).
\end{equation}
The consistency and the asymptotic normality are not proved except in the degenerate case $\beta=0$ for all $\theta\in\Theta$ in \cite{straumann:2005}. The problem of the procedure (and of any volatility forecast) is that the inverted EGARCH(1,1) model \eqref{egarchinv} is stable only for some values of $\theta$. Thus, contrary to other GARCH models, the QML estimation procedure is not always reliable for the EGARCH model, see the discussion in \cite{harvey:chakravarty:2008}. Thus, other estimation procedure has been investigated such as the bayesian, bias correction and the Whittle procedure in \cite{vrontos:dellaportas:politis:2000,demos:kyrikopoulou:2010,zaffaroni:2009} respectively. Another approach is to introduce models that behave like the EGARCH(1,1) model but where the QMLE could be more reliable, see \cite{harvey:chakravarty:2008,sucarrat:escribano:2010,francq:wintenberger:zakoian}.\\

We prove the strong consistency of the QMLE for the general model \eqref{eq:gmod} when the maximization procedure is done on a continuously invertible domain. We give sufficient conditions called the continuous invertibility of the model such that the QMLE is strongly consistent. More precisely we assume that the SRE \eqref{SREgen} produces continuous functions $\hat g_t$ of $\theta$ on $\Theta$. The continuous invertibility holds when the limiting law of $\hat g_t$ corresponds to the law of some continuous function $g_t$ on $\Theta$ that does not depend on the initial function $\hat g_0$. The continuous invertibility ensures the stability of the estimation procedure regardless the initial function $\hat g_0$ chosen arbitrarily in practice. Under few other assumptions, we prove that the QMLE is strongly consistent for continuously invertible models on the compact set $\Theta$. The continuous invertiblity should be checked systematically on models before using QMLE. One example of such continuously invertible models with properties similar than  the EGARCH model is the Log-GARCH model studied in \cite{francq:wintenberger:zakoian}. \\

As the continuous invertibility is an abstract assumption, we provide sufficient conditions for continuous invertibility  collected  in the assumption {\bf (CI)} below. These conditions ensure the invertibility of the model at any point $\theta$ of the compact set $\Theta$ and some regularity of the model with respect to the parameter $\theta$. As the inverted EGARCH(1,1) model \eqref{egarchinv} is a regular function of $\theta$, it satisfies {\bf (CI)} on any $\Theta$ such that the invertibility condition INV($\theta$) is satisfied for any $\theta\in \Theta$. Thus we prove the strong consistency of the QMLE for the invertible EGARCH(1,1) model  when INV($\theta$) is satisfied for any $\theta\in \Theta$. It is a serious advantage of our approach based on the continuous invertibility condition {\bf (CI)} compared with the approach of \cite{straumann:2005}. Based on uniform Lipschitz coefficients this last approach is more restrictive than our when applied to the EGARCH(1,1) model.  Moreover, we also prove the strong consistency of the natural volatility forecasting $\hat \sigma_n^2=\ell(\hat g_n(\hat\theta_n))$ of $\sigma_{n+1}^2$ under  {\bf (CI)}. Continuous invertibility seems to be well suited to assert volatility forecasting because $\hat \sigma_n^2$ expresses  as  functions $\hat g_n$ evaluated at points $\hat\theta_n\neq \theta_0$. To infer in practice the EGARCH(1,1) model, we propose to stabilize the QMLE. We constrain the QMLE on some compact set satisfying the empirical version of the condition INV($\theta_0$). This new estimator  $\hat \theta_n^S$ called Stable QMLE (SQMLE) produces only reliable volatility forecasting such that $\hat \sigma_n^2=\ell(\hat g_n(\hat\theta_n^S))$ does not depend asymptotically of the initial value $\hat\sigma_0^2=\ell(\hat g_0)$. It is not the case of the classical QMLE the continuous invertibility condition $INV(\theta)$ is not observed in practice. Thus INV($\theta$) might not be satisfied for any $\theta$ in the compact set $\Theta$ of the maximization procedure. And the whole procedure might have some chaotic behavior with respect to any initial value $\hat\sigma_0^2$ used in the inverted model.\\

The asymptotic normality of the SQMLE in the EGARCH(1,1) model is proved under the additional assumption {\bf (MM)}. The moment conditions in {\bf (MM)} are sufficient for the existence of the asymptotic covariance matrix. No uniform moment condition on the score vector is assumed. The proof is based on functional SREs as \eqref{sreg} and their perturbations. As for the strong consistency, we need to refine arguments from \cite{straumann:mikosch:2006}. As we do not apply any uniform Strong Law of Large Number, we develop in the proof new arguments based on SREs satisfied by differences such as $|\hat L_n(\hat\theta_n)-\hat L_n(\theta_0)|$. We consider conditions of moments {\bf (MM)} only at the point $\theta_0$ where   the expression of the score vector simplifies.  In the EGARCH(1,1) model, the conditions {\bf (MM)} take the simple form $\E[Z_0^4]<\infty$ and $\E[(\beta_0 -2^{-1}(\gamma_0 Z_0+\delta_0\left|Z_0\right|)^2]<1$ and can be checked in practice by estimating the innovations. We believe that this new approach gives sharp conditions for asymptotic normality for other models.\\

The paper is organized as follows. In Section \ref{sec:ci}, we discuss the standard notions of invertibility and introduce the  continuous invertibility and its sufficient condition {\bf (CI)}. We prove the strong consistency of the QMLE for general continuously invertible models in Section \ref{sec:sc}. The consistency of the volatility forecasting is also proved under the sufficient condition {\bf (CI)}. We apply this results in the EGARCH(1,1) model in Section \ref{sec:iqmle}. For this model, we propose a new method called Stable QMLE that produces only reliable volatility forecasting. The asymptotic normality of SQMLE for the EGARCH(1,1) model is given in Section \ref{sec:an}. The proofs of  technical Lemmas are collected in Section \ref{sec:pr}. 
 
\section{Preliminaries}\label{sec:ci}
\subsection{The general volatility  model}
In  this paper,  the innovations $Z_t\in \R $ are iid random variables (r.v.) such that $Z_t$ is centered and normalized, i.e. $\E[Z_0]=0$ and $\E[Z_0^2]=1 $. Consider the general DGP $
X_{t} = \sigma_t Z_{t}$ satisfying $
h(\sigma_{t+1}^2)  = \psi_{t}(h(\sigma_{ t}^2),\theta_{0})$ for all $t\in\Z$. The function $h$ is a bijection from some subset $\R^+$ to some subset of $\R$
of inverse $\ell$ called the link function. A first question regarding such general SRE is the existence of the model, i.e. wether or not a stationary solution exists.  Hereafter, we work under the general assumption
\begin{description}
\item[(ST)] The SRE \eqref{eq:gmod}   admits a unique stationary solution denoted $(\sigma_t^2)$ that is non anticipative, i.e. $\sigma_t^2$ is independent of $(Z_t,Z_{t+1},Z_{t+2},\ldots)$ for all $t\in\Z$, and  has finite log-moments: $\E\log^+ \sigma_0^2<\infty$. 
\end{description}

The GARCH(1,1) model \eqref{eq:garch} satisfies the condition {\bf (ST)} if and only if (iff) $\E[\log(\beta_0+\gamma_0Z_0^2)]<0$, see \cite{nelson:1990} for the existence of the stationary solution and \cite{berkes:horvath:kokoszka:2003} for the existence of log moments.
The EGARCH(1,1) model \eqref{eq:egarch} satisfies the condition {\bf (ST)}  iff $|\beta_0|<1$, see \cite{nelson:1991}. In this case, the model has nice ergodic properties: any process recursively defined by the SRE from an arbitrary initial value approximates exponentially fast a.s. the original process $(\sigma_t^2)$. In the sequel, we say that the sequence of non negative r.v. $(W_{t})$ converges exponentially almost surely to $0$,  
$W_{t}\xrightarrow{{e.a.s.}}0$ as $t\to \infty$, if  $W_{t} =o(e^{-Ct})$ a.s. for some r.v. $C>0$. We will also use the notation $x^+$ for the positive part of $x$, i.e. $x^+=x\vee 0$ for any $x\in\R$.

\subsection{Invertible models}\label{subsec:inv}

Under {\bf (ST)} the process $(X_t)$ is stationary, non anticipative and thus ergodic as a Bernoulli shift of an ergodic sequence $(Z_t)$, see \cite{krengel:1995}. Let us now investigate the question of invertibility of the general model \eqref{eq:gmod}. The classical notions of  invertibility are related with convergences of SRE and thus are implied by  Lyapunov conditions of Theorem 3.1 in \cite{bougerol:1993}.  Following \cite{tong:1993}, we say that a volatility model is invertible if the  volatility can be expressed as a function of the past observed values:
\begin{Def}\label{def:inv}
Under {\bf (ST)}, the model is invertible if the sequence of the volatilities $(\sigma_t^2 )$ is adapted to the filtration generated by $(X_{t-1},X_{t-2},\cdots)$.
\end{Def}
Using the relation $Z_t= X_t/\ell(h(\sigma_t))$ in the expression of $\psi_t$ yields the new SRE \eqref{SREinftrue}: $h(\sigma_{t+1}^2)=\phi_t(h(\sigma_t^2),\theta_0)$. Now the  random functions  $ \phi_t(\cdot,\theta_0) $ depends only on $X_t$. As $(X_t)$
is an ergodic and stationary process, it is also the case of the sequence of parametrized maps $(\phi_t(\cdot,\theta_0))$. Using Theorem 3.1 in \cite{bougerol:1993}, the invertibility of the model follows  if the $\phi_t(\cdot,\theta_0)$ are Lipschitz maps  such that there exists $r>0$ satisfying
\begin{equation} \label{eq:inv}
\mbox{inv}(\theta_0)\E[\log^+|\phi_0(x,\theta_0)|]<\infty \mbox{ for some }x\in E,\;\E[\log^+ \Lambda(\phi_0(\cdot,\theta_0) )]<\infty \mbox{ and } \E[\log \Lambda(\phi_0(\cdot,\theta_0)^{(r)})]<0.
\end{equation}
Here $\Lambda(f)$ denotes the Lipschitz coefficient of any function $f$ defined  by the relation (in the case where $f$ is real valued)
$$
\Lambda(f)=\sup_{x\neq y}\frac{|f(x)-f(y)|}{|x-y|}
$$
$f_t^{(r)}$ denotes the iterate $f_t\,o\,f_{t-1}\,o\cdots o\,f_{t-r}$ for any sequence of function $(f_t)$.
The conditions \eqref{eq:inv} are called the conditions of invertibility in \cite{straumann:mikosch:2006} and is  proved there that
\begin{Prop}
Under {\bf (ST)} and \eqref{eq:inv}, the general model \eqref{SREinftrue} is invertible.
\end{Prop}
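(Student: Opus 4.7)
The plan is to apply Bougerol's SRE theorem (Theorem 3.1 of \cite{bougerol:1993}) to the inverted recursion $h_{t+1}=\phi_t(h_t,\theta_0)$ and then identify its unique stationary solution with the process $(h(\sigma_t^2))$ coming from \textbf{(ST)}.

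First I would check that the driving sequence of random Lipschitz maps $(\phi_t(\cdot,\theta_0))_{t\in\Z}$ is stationary and ergodic. Under \textbf{(ST)} the process $(\sigma_t^2)$ is stationary and non-anticipative, so $(X_t)=(\sigma_tZ_t)$ is a Bernoulli shift of the iid sequence $(Z_t)$, hence stationary and ergodic by \cite{krengel:1995}. Since each $\phi_t(\cdot,\theta_0)$ is by construction a measurable functional of $X_t$ only, the sequence of maps inherits stationarity and ergodicity.

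Next I would observe that \eqref{eq:inv} is exactly the list of hypotheses of Bougerol's theorem: a finite log-moment at a single point $x\in E$, integrability of $\log^+\Lambda(\phi_0(\cdot,\theta_0))$, and a negative top Lyapunov exponent for some $r$-fold iterate. Invoking the theorem produces a unique stationary non-anticipative solution $(\tilde h_t)$ of $\tilde h_{t+1}=\phi_t(\tilde h_t,\theta_0)$, with $\tilde h_t$ measurable with respect to $\sigma(\phi_s,\,s<t)\subseteq\sigma(X_{t-1},X_{t-2},\ldots)$.

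The last and only delicate step is the identification $\tilde h_t = h(\sigma_t^2)$. By the very definition of $\phi_t$, plugging $Z_t=X_t/\sigma_t$ into $\psi_t$ gives $\phi_t(h(\sigma_t^2),\theta_0)=\psi_t(h(\sigma_t^2),\theta_0)=h(\sigma_{t+1}^2)$, so $(h(\sigma_t^2))$ is itself a stationary non-anticipative solution of the inverted SRE. Uniqueness in Bougerol's theorem then forces $h(\sigma_t^2)=\tilde h_t$ a.s., and hence $\sigma_t^2=\ell(\tilde h_t)$ is $\sigma(X_{t-1},X_{t-2},\ldots)$-measurable, which is precisely Definition~\ref{def:inv}. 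The main obstacle is ensuring $(h(\sigma_t^2))$ belongs to Bougerol's uniqueness class: this is handled by non-anticipativeness together with the exponential contraction of the $r$-th iterate, which absorbs any stationary initialization and does not require a separate log-moment assumption on $h(\sigma_0^2)$ beyond what \textbf{(ST)} already provides.
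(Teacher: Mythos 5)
Your argument is correct and coincides with the proof the paper relies on: the paper does not prove this Proposition itself but refers to \cite{straumann:mikosch:2006}, whose argument is precisely your combination of Bougerol's Theorem~3.1 applied to the stationary ergodic sequence of maps $(\phi_t(\cdot,\theta_0))$ under the three conditions in \eqref{eq:inv}, followed by the identification of the unique stationary solution with $(h(\sigma_t^2))$ and the resulting measurability with respect to $\sigma(X_{t-1},X_{t-2},\ldots)$. Nothing further is needed.
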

The GARCH(1,1) model \eqref{eq:garch} is invertible as soon as $0\le \beta_0<1$.\\

The invertibility of the EGARCH(1,1) model is more difficult to assert due to the exponential function in the SRE \eqref{eq:egarchinv}. Let us describe the sufficient condition of invertibility INV($\theta_0$) of the EGARCH(1,1) model given in \cite{straumann:2005,straumann:mikosch:2006}. It expresses as a Lyapunov condition  \eqref{eq:condtheta} on the coefficients $\theta_0=(\alpha_0,\beta_0,\gamma_0,\delta_0)$. This condition does not depend on $\alpha_0$ when the DGP $(X_t)$ is itself the stationary solution of the EGARCH(1,1) model for $\theta_0$. Indeed, $(\log\sigma_t^2)$ admits a MA($\infty$) representation 
$$
\log\sigma_{t}^{2}=\alpha_0(1-\beta_0)^{-1}+\sum_{k=1}^{\infty}\beta_0^{k-1}(\gamma_0Z_{t-k}+\delta_0|Z_{t-k}|).
$$
Plugging in this MA($\infty$) representation into \eqref{eq:condtheta}, we obtain the equivalent sufficient condition
\begin{equation}
\E\Big[\log \Big(\max\Big \{\beta_0,2^{-1}\exp\Big(2^{-1}\sum_{k=0}^{\infty}\beta_0^{k}(\gamma_0 Z_{-k-1}+\delta_0 \left|Z_{-k-1}\right|)\Big)
 (\gamma_0 Z_{0}+\delta_0 \left|Z_{0}\right|)-\beta_0\Big\}\Big)\Big]<0.
\label{eq:condtheta2}
\end{equation}
Using the Monte Carlo algorithm and assuming that $Z_0 $
is $\mathcal N(0,1)$-distributed, we report in
Figure \ref{fig:Beta} the largest values of $\beta_0$ that satisfies the condition \eqref{eq:condtheta}
 on a grid of values of $(\gamma_0,\delta_0)$. 
 \begin{figure}[h!]
\centering
{\includegraphics[scale=0.4]{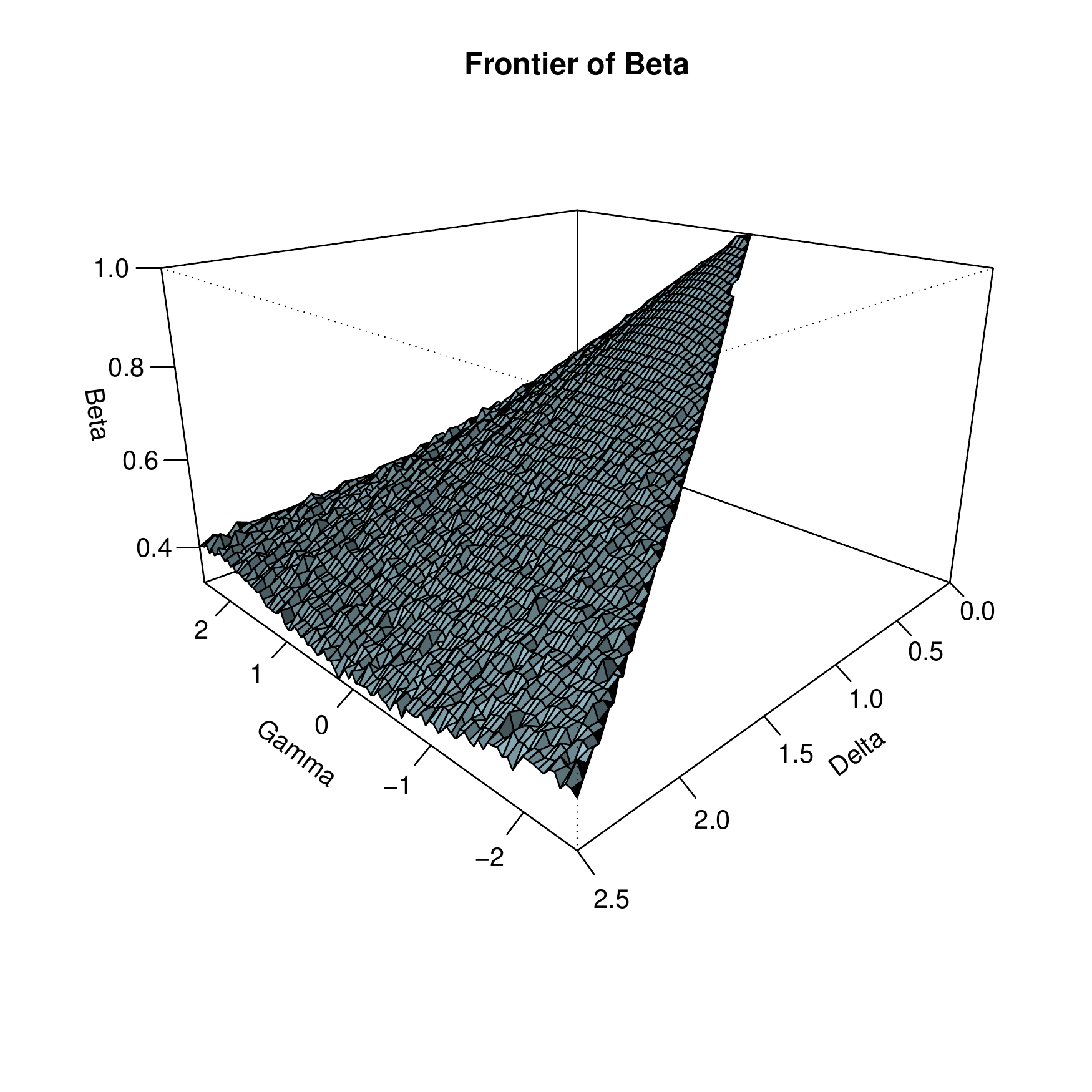} \includegraphics[scale=0.4]{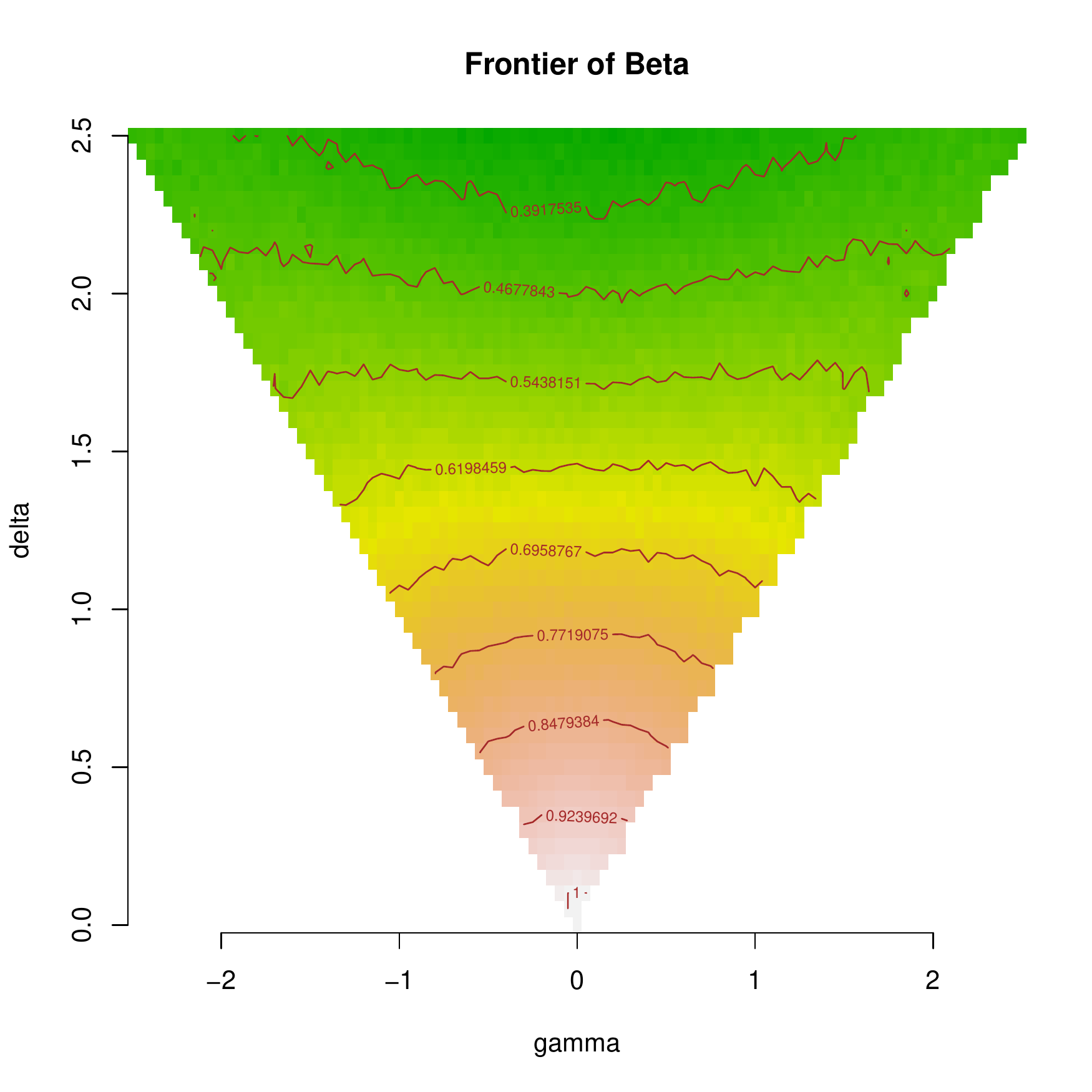}}
  \caption{Perspective and contour plots of the domain for invertibility.}
 \label{fig:Beta}
\end{figure}

The constraint on $\beta_0$ is always stronger than the  stationary constraint $|\beta_0|<1$. It exists stationary EGARCH(1,1) models that are not invertible, i.e. the inverted model   
$$
\log \hat\sigma_{t+1}^2=\alpha_0+\beta_0 \log \sigma_{t}^2+(\gamma_0  X_{t}+\delta_0 |X_{t}|)\exp(-\log \sigma_{t+1}^2/2),\qquad t\ge 0,
$$
is not stable wrt  any possible choice of initial value $\log\hat\sigma_0^2$. On the opposite, Sorokin exhibits in  \cite{sorokin:2011} sufficient conditions for some chaotic behaviour of the inverted EGARCH(1,1) model under $|\beta_0|<1$. To emphasize the danger to work with non invertible EGARCH(1,1) models, we report in Figure \ref{fig:conv} the convergence criterion $\sum_{t=1}^N(\log\sigma_t^2-\log\hat\sigma_t^2)$ wrt arbitrary initial values for two different values of $\theta_0$ ($N=10~000$). The first picture represents a stable case where INV($\theta_0$) holds. The second picture represents a chaotic case where the condition of non invertibility given in \cite{sorokin:2011} is satisfied. The stationary constraint $|\beta_0|<1$ is satisfied in both cases. It is interesting to note that the convergence criterion does not explode in the chaotic case. It is an important difference between the non invertible GARCH(1,1) and EGARCH(1,1) models: the non invertible GARCH(1,1) is always explosive because it is also non stationary. The inference by QMLE remains stable due to this very specific behaviour, see \cite{jensen:rahbek:2004}. On the opposite, we have driven numerical experiments and we are convinced that the QMLE procedure is not stable when the EGARCH(1,1) model is non invertible. It is not surprising as it is not possible to recover the volatility process from the inverted model,  even when the parameter $\theta_0$ of the DGP is known.

 \begin{figure}[h!]
\centering
{\begin{center}
\includegraphics[height=4cm,width=7cm]{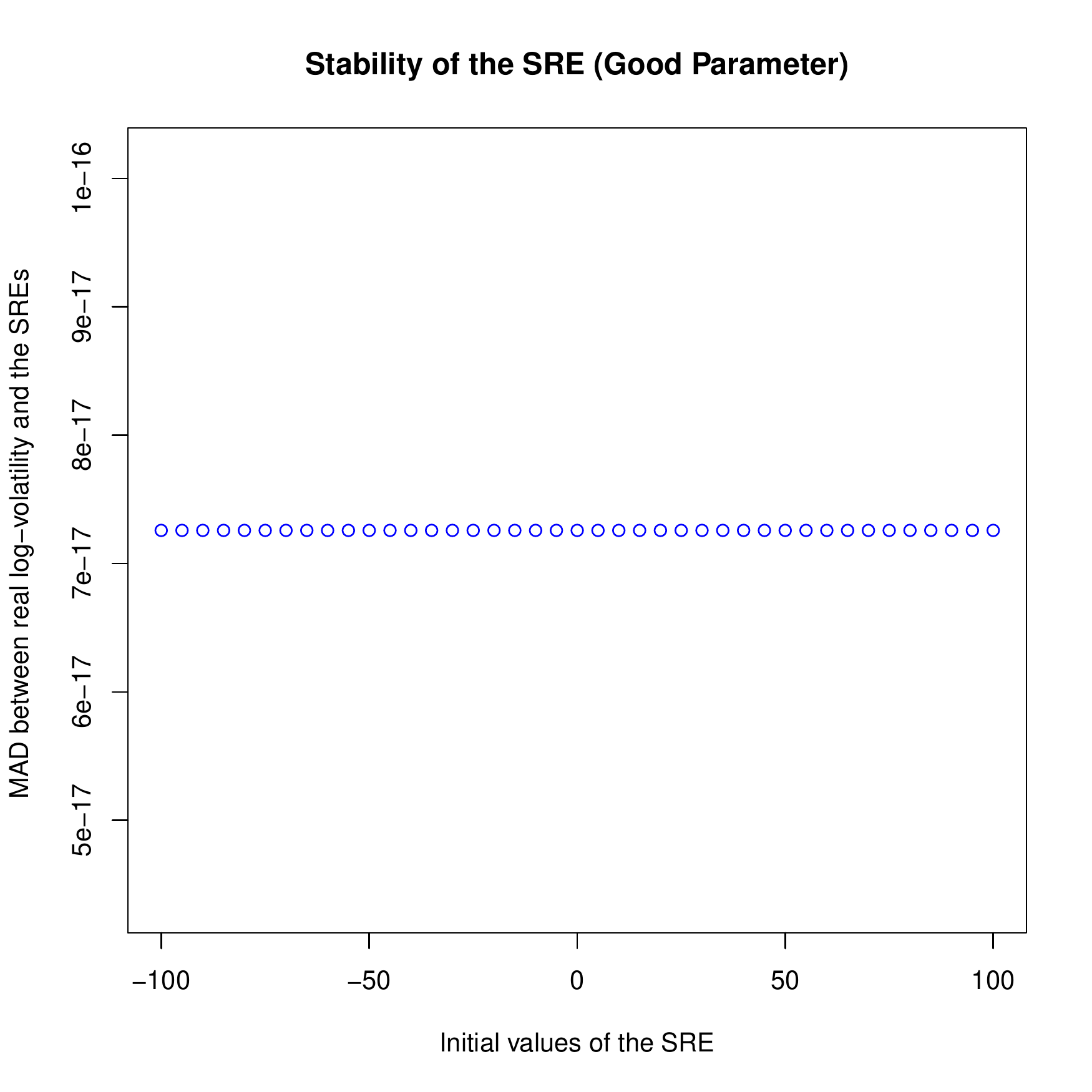}
\includegraphics[height=4cm,width=7cm]{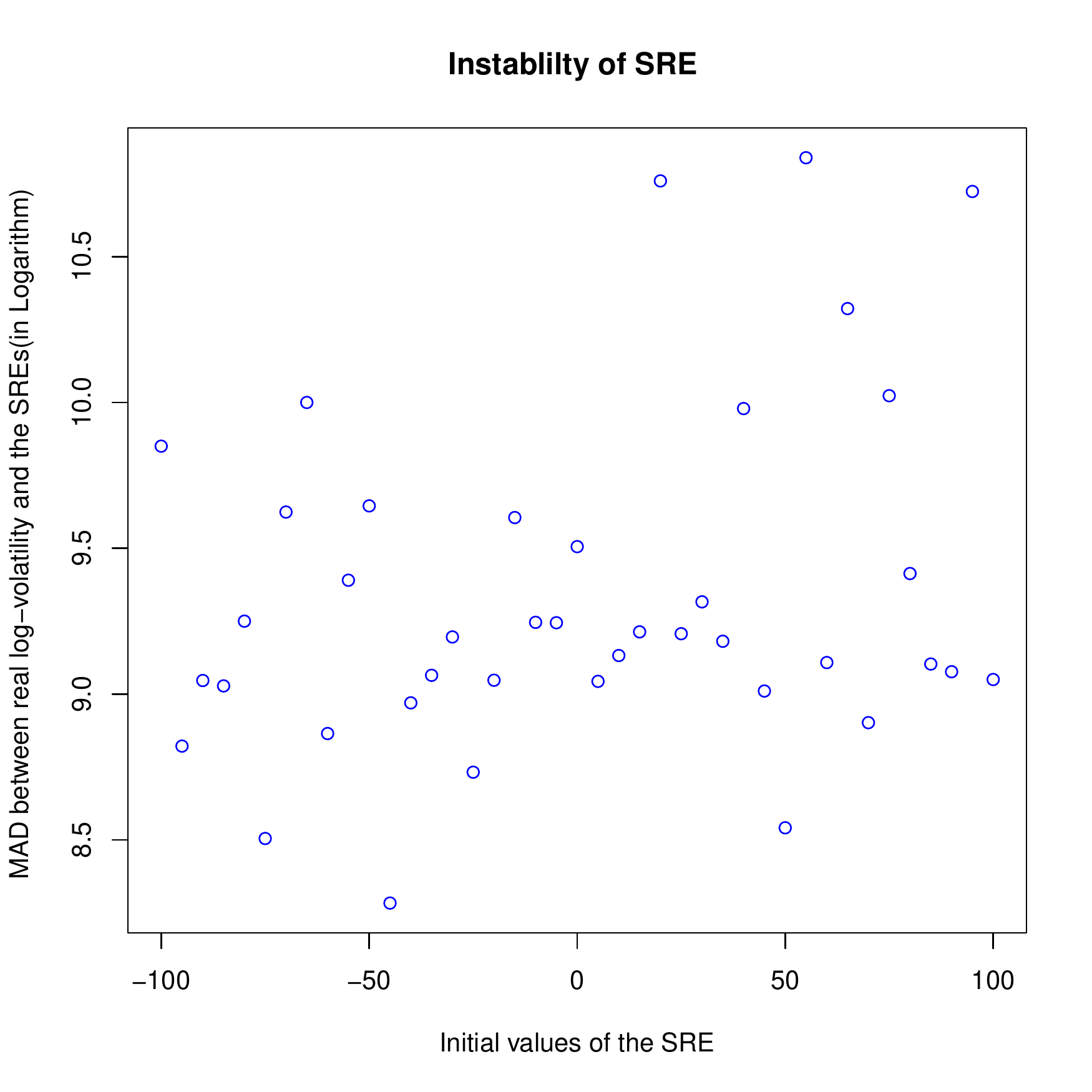}
\end{center}}
  \caption{Stable and non stable inverted EGARCH(1,1) models.}
 \label{fig:conv}
\end{figure}

\section{Strong consistency of the QMLE on continuously invertible domains}\label{sec:sc}
\subsection{Continuously invertible models}

Assume now that the DGP follows the general model \eqref{SREinftrue} for an unknown value $\theta_0$. Consider the inference of the QMLE 
$
\hat{\theta}_{n}$ defined as $\hat \theta_n =\mbox{argmin}_{\theta\in\Theta}\hat{L}_{n}(\theta)$ for some compact set $\Theta$. Here $\hat L_n$ is the QL criteria defined in \eqref{eq:ql} and based on  the approximation $\hat g_t(\theta)$ of the volatility:
$$
\hat{g}_{t+1}(\theta ) = \phi_t(\hat{g}_{t}(\theta ),\theta),\qquad\forall t\ge 0,\forall \theta\in\Theta
$$
starting at an arbitrary initial value $\hat{g}_{0}(\theta )$, for any $\theta\in \Theta$. The invertibility of the model is not sufficient to assert the consistency of the inference: as the parameter $\theta_0$ is unknown,  the conditions of invertibility \eqref{eq:inv} do not provide the stability of the approximation $\hat g_t(\theta)$ wrt the initial value $\hat{g}_{0}(\theta )$ when $\theta\neq \theta_0$. satisfied for $(\phi_t(\cdot,\theta))$ for any $\theta\in\Theta$ then the unique stationary solution $(g_t)$ exists and is defined by  the relation
\begin{equation}\label{sreg}
{g}_{t+1}(\theta ) = \phi_t({g}_{t}(\theta ),\theta),\qquad\forall t\in \Z, \, \forall \theta \in \Theta.
\end{equation}
Assume there exists some subset  $\mathcal K$ of $\R $ such that the random functions $(x,\theta)\to \phi_t(x,\theta)$  restricted on $\mathcal K\times \Theta$  take values in $\mathcal K$ for. Consider that the initial values $\theta\to\hat g_0(\theta)$ constitutes a continuous function on $\Theta$ that takes its values in $\mathcal K$. Denote $\|\cdot\|_\Theta$ the uniform norm and $\mathcal C_\Theta$ the space of continuous functions from $\Theta$ to $\mathcal K$.  By a recursive argument, it is obvious that the random functions $\hat g_t$ belong to $\mathcal C_\Theta$ for all $t\ge 0$.\\

We are now ready to define the notion of continuous invertibility:
\begin{Def}
The model is continuously invertible on $\Theta$ iff $\|\hat g_t(\theta)-g_{t}(\theta)\|_\Theta\to 0$ a.s. when $t\to \infty$.
\end{Def}
This continuous invertibility notion is   crucial   for the strong consistency of the QMLE. 

\subsection{Strong consistency of the QMLE constrained to a continuously invertible domain}

Let us assume the regularity and boundedness from below on the link function $\ell$:
\begin{description}
\item[(LB)]   The maps $x\to 1/\ell(x)$ and $\log(\ell(x))$ are Lipschitz functions on $\mathcal K$ and there exists $m>0$ such that $ \ell(x)\ge m$ for all $x\in\mathcal K$.
\end{description}
Classically, we also assume the identifiability of the model
\begin{description}
\item[(ID)]   
$\ell (g_0(\theta))=\sigma_0^2$ a.s. for some $\theta\in\Theta$
 iff $\theta=\theta_0$.
 \end{description}
The consistency of the QMLE follows from the continuous invertibility notion:
\begin{Theo}\label{th:sc}
Assume that {(\bf ST)}, {\bf (LB)} and {\bf (ID)} for a volatility model that is continuously invertible on the compact set $\Theta$. Then the QMLE on $\Theta$ is strongly consistent, $\hat\theta_n\to \theta_0$ a.s., when $\theta_0\in {\Theta}$.
\end{Theo}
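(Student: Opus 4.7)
The plan is the classical Wald-type strategy for $M$-estimators. Introduce the stationary surrogate $l_t(\theta)=X_t^2/\ell(g_t(\theta))^2+\log\ell(g_t(\theta))$ built from the stationary solution $(g_t)$ of \eqref{sreg}, and set $L_n(\theta)=(2n)^{-1}\sum_{t=1}^n l_t(\theta)$ and $L(\theta)=\E l_0(\theta)/2$. Strong consistency of $\hat\theta_n$ will follow from three standard ingredients: (i) $\|\hat L_n-L_n\|_\Theta\to 0$ a.s., (ii) $\|L_n-L\|_\Theta\to 0$ a.s., and (iii) $L$ has a unique minimum at $\theta_0$. Indeed, combining these with $\hat L_n(\hat\theta_n)\le \hat L_n(\theta_0)$ and compactness of $\Theta$, any subsequential limit $\theta_\ast$ of $\hat\theta_n$ will satisfy $L(\theta_\ast)\le L(\theta_0)$, forcing $\theta_\ast=\theta_0$.

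For step (i), assumption \textbf{(LB)} makes $x\mapsto 1/\ell(x)^2$ and $x\mapsto\log\ell(x)$ Lipschitz on $\mathcal K$ (the lower bound $\ell\ge m$ upgrades Lipschitz of $1/\ell$ to Lipschitz of $1/\ell^2$). Hence there is a deterministic $C>0$ with
\[
\sup_{\theta\in\Theta}\bigl|\hat l_t(\theta)-l_t(\theta)\bigr|\le C(1+X_t^2)\|\hat g_t-g_t\|_\Theta.
\]
Continuous invertibility gives $\|\hat g_t-g_t\|_\Theta\to 0$ a.s., while $\E\log^+\sigma_0^2<\infty$ from \textbf{(ST)} implies $\E\log^+X_0^2<\infty$, so by Borel--Cantelli $X_t^2=o(e^{\epsilon t})$ a.s.\ for every $\epsilon>0$. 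Exploiting the exponential rate dictated by the SRE structure underlying continuous invertibility, the per-sample product tends to $0$ and Cesaro averaging yields (i).

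Step (ii) is a uniform ergodic theorem applied to the stationary ergodic process $(X_t,g_t(\cdot))$: the continuity of $\theta\mapsto l_t(\theta)$ on the compact $\Theta$, together with the lower bound $\log\ell\ge\log m$ coming from \textbf{(LB)} (which controls $\E\sup_\Theta l_0^-$), lets one invoke Rao's uniform ergodic theorem to obtain (ii), with $L$ possibly valued in $(-\infty,+\infty]$. For step (iii), uniqueness of the stationary solution in \textbf{(ST)} ensures $\ell(g_t(\theta_0))=\sigma_t^2$ a.s. Using $X_0=\sigma_0Z_0$, the independence of $Z_0$ from $g_0(\cdot)$, and $\E Z_0^2=1$, the difference $L(\theta)-L(\theta_0)$ reduces to an expectation of an expression of the form $h(u)=u-1-\log u$ (up to an innocuous weighting by $\sigma_0^{-2}$) evaluated at the ratio of $\ell(g_0(\theta))$ to $\sigma_0^2$. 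Strict convexity of $h$ with $h\ge 0$ and $h(u)=0\iff u=1$ yields $L(\theta)\ge L(\theta_0)$ with equality iff $\ell(g_0(\theta))=\sigma_0^2$ a.s., and \textbf{(ID)} then forces $\theta=\theta_0$.

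The main obstacle is step (i). Continuous invertibility as defined is only pointwise-in-$\omega$ convergence of $\|\hat g_t-g_t\|_\Theta$ to $0$, which by itself does not force the product $(1+X_t^2)\|\hat g_t-g_t\|_\Theta$ to vanish: one really needs the e.a.s.\ rate that the SRE structure behind \textbf{(CI)} delivers in order to beat the sub-exponential growth of $X_t^2$. A secondary difficulty is that $\E l_0(\theta)$ may fail to be finite uniformly in $\theta\neq\theta_0$, so step (ii) must be handled in the extended-real setting and steps (ii)--(iii) combined in the form ``any limit of $\hat\theta_n$ has $L$-value no larger than the finite $L(\theta_0)$''.
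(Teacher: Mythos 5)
Your plan is essentially the paper's: the proof given there is the same Wald-type argument, with the approximation step (your (i), the paper's (a)), the identification step (your (iii), the paper's (b)), and a localization step in place of your (ii). On that last point, the paper does not invoke a uniform ergodic theorem, and for the reason you anticipate: $\E\sup_{\Theta}l_0^+$ need not be finite. Instead it argues one-sidedly, applying the scalar ergodic theorem to $\inf_{\theta^*\in V(\theta)}l_t(\theta^*)\wedge K$ (integrable because $l_t\ge\log m$ under \textbf{(LB)}), letting $K\to\infty$ by monotone convergence, using continuity of $l_0$ to choose $V(\theta)$ so that $\E[\inf_{V(\theta)}l_0]>\E[l_0(\theta_0)]$, and finishing with a finite subcover of the compact set. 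Your closing remark --- that (ii) and (iii) must be combined as ``any limit point of $\hat\theta_n$ has $L$-value no larger than the finite $L(\theta_0)$'' --- is exactly this argument, so once you drop the appeal to Rao's theorem you coincide with the paper.

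Your ``main obstacle'' in step (i) is a genuine and well-placed observation. The paper's displayed bound \eqref{eq:approx} silently drops the factor $X_t^2$ multiplying the increment of $1/\ell(\cdot)^2$; with that factor restored, the summand is of order $(1+X_t^2)\|\hat g_t-g_t\|_\Theta$, and the bare definition of continuous invertibility (pointwise a.s.\ convergence $\|\hat g_t-g_t\|_\Theta\to0$, with no rate) does not make it vanish, since \textbf{(ST)} only gives $X_t^2=o(e^{\epsilon t})$ a.s. Your proposed repair --- use the e.a.s.\ rate of $\|\hat g_t-g_t\|_\Theta$ that Theorem \ref{th:cont} delivers under \textbf{(CI)}, together with $\E\log^+X_0^2<\infty$ and Borel--Cantelli --- is the right one, but be aware that it establishes the theorem under \textbf{(CI)} (or under a definition of continuous invertibility strengthened to an e.a.s.\ rate), not under the hypothesis as literally stated. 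This is a gap in the paper's own proof as much as in yours; in the application to the EGARCH(1,1) model the stronger condition \textbf{(CI)} is what is actually verified, so nothing downstream is affected.
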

\begin{proof}
First, note that from the continuous invertibility of the model on $\Theta$, the SRE  \eqref{sreg} admits a stationary solution in  $E=\mathcal C_\Theta$ that is a separable complete metric space (equipped with the metric $d(x,y)=\|x-y\|_\Theta$). Let us denote $\Phi_t$ the mapping acting on $\mathcal C_\Theta$ and satisfying
$$
g_{t+1}=\Phi_t(g_t)\quad\mbox{iff}\quad g_{t+1}(\theta)=\phi_t(g_t(\theta),\theta),\quad \forall \theta\in \Theta.
$$
We will apply the principle of Letac \cite{letac:1986} extended to the stationary sequences as in \cite{straumann:mikosch:2006}: the existence of a unique stationary non anticipative solution follows from  the convergence a.s. of the backward equation for any initial value $g\in \mathcal C_\Theta$:
$$
Z_t(g):=\Phi_0 \, o \,\Phi_{-1}  o\cdots o\, \Phi_{-t} (g)\to_{t\to \infty}Z
$$
where $Z$ does not depend on $g$. By stationarity, $Z_t(g)$ is distributed as $\hat g_t$ when $\hat g_0=g$. Thus, we have 
$$\P(\lim_{t\to\infty}\|Z_t(g)-g_0\|_\Theta=0)=\P(\lim_{t\to\infty}\|\hat g_t-g_t\|_\Theta=0)$$ where $g_0(\theta)$ is well defined for each $\theta\in\Theta$ as the solution of the SRE \eqref{sreg}. Thus, under continuous invertibility, an application of Letac's principle leads to the existence of a unique stationary solution distributed denoted also $(g_t)$ that coincides with $g_t(\theta)$ at any point $\theta\in\Theta$. In particular, $g_t$ belongs to $\mathcal C_\Theta$ and 
$$
2 n {L}_{n}(\theta)  =  \sum_{t=1}^{n} {l}_{t}(\theta) 
  =  \sum_{t=1}^{n} X_{t}^{2}/\ell( {g}_{t}(\theta))^{2}+\log(\ell( {g}_{t}(\theta)).
$$is a continuous function on $\Theta$.\\

Let us turn to the proof of the strong consistency based on standard arguments, see for example the book of Francq and Zako\" ian \cite{francq:zakoian:2010}. As the model satisfies the identifiability condition {\bf (ID)}, the strong consistency follows  the intermediate results
\begin{enumerate}
\item[(a)] $\lim_{n\to \infty} \sup_{\theta\in\Theta}|
L_n(\theta)-\hat{L}_n(\theta)|=0$  a.s.  
\item[(b)] $\E|l_0(\theta_0)|<\infty$ and if $\theta\neq\theta_0$ then $\E[l_0(\theta)]>\E[l_0(\theta_0)]$.
\item[(c)] Any $\theta\neq \theta_0$ has a neighborhood
$ V(\theta)$ such that 
$$
\lim\inf_{n\to\infty}\inf_{\theta^*\in
V(\theta)}\hat{L}_n(\theta^*)>\E l_t(\theta_0)\;\; \mbox{ a.s. }
$$
\end{enumerate}
 
Let us prove that (a) is satisfied when the model is continuously invertible and the  condition {\bf (LB)} holds. As $1/\ell$ and $\log( \ell)$ are Lipschitz continuous functions there exists some constant $C>0$ such that
\begin{equation}\label{eq:approx}
|
L_n(\theta)-\hat{L}_n(\theta)|\le C\frac1n\sum_{t=1}^n|\hat g_t(\theta)- g_t(\theta)|.
\end{equation}
The desired convergence to $0$ of the upper bound that is a Cesaro mean follows from the definition of the continuous invertibility.\\

The first assertion of (b) follows from the identity 
$$
 l_0(\theta_0)=\log(\sigma_0^2) + Z_0^2.
$$
Thus, as $\sigma_0=\ell(g_0(\theta_0))\ge m$ from {\bf (LB)} and $\E\log^+\sigma_0^2 <\infty$ under {\bf (ST)}, we have that $\log(\sigma_0^2)$ is integrable. Moreover, $\E Z_0^2=1$ by assumption  and the assertion $\E|l_0(\theta_0)|<\infty$  is proved. To prove the second assertion,  note that $\theta\to \E[l_0(\theta)]$ has a unique minimum iff $\E [\sigma_0^2/\ell(g_0(\theta))-\log(\sigma_0^2/\ell(g_0(\theta) )] $ has a unique minimum. Under the identifiability condition, as $x-\log(x)\ge 1$ for all $x>0$ with equality iff $x=1$, we deduce that for any $\theta\neq\theta_0$ we have $  \E[l_0(\theta)]> \E[l_0(\theta_0)]$.\\

Finally, let us prove (c) under continuous invertibility, {\bf (LB)} and {\bf (ID)}. First, under continuous invertibility, we have
\begin{eqnarray*}
\lim \inf_{\theta^*\to\theta}\hat L_n(\theta^*)&=&\lim\inf_{n\to\infty}\inf_{\theta^*\in
V(\theta)}{L}_n(\theta^*)+\lim\inf_{n\to\infty}\inf_{\theta^*\in
V(\theta)}(\hat{L}_n(\theta^*)-L_n(\theta^*))\\
&=&\lim\inf_{n\to\infty}\inf_{\theta^*\in
V(\theta)}{L}_n(\theta^*)
\end{eqnarray*}
by using the convergence to $0$ of the Cesaro mean \eqref{eq:approx}. Second, by ergodicity of the stationary solution $(g_t)$ we have the ergodicity of the sequence $(\inf_{\theta^*\in
V(\theta)}l_t(\theta^*))$. Moreover, under the condition {\bf (LB)}, we have that $\inf_{\theta^*\in
V(\theta)}l_t(\theta^*)\ge \log(m)>-\infty$ a.s. Then, for any $K>0$, the sequence $\inf_{\theta^*\in
V(\theta)}l_t(\theta^*)\wedge K$ is integrable. We use the classical SLLN and obtain
$$
\lim_{n\to\infty}\inf_{\theta^*\in
V(\theta)}L_n(\theta^*)\wedge K=\E\Big[\inf_{\theta^*\in
V(\theta)}l_0(\theta^*)\wedge K\Big]\qquad a.s.
$$
Letting $K\to - \infty$ we obtain that 
$$
\lim_{n\to\infty}\inf_{\theta^*\in
V(\theta)}L_n(\theta^*)=\E\Big[\inf_{\theta^*\in
V(\theta)}l_0(\theta^*)\Big]\in\R\cup\{+\infty\}.
$$
Finally, remark that by continuity of $\theta\to g_t(\theta)$ the function  $l_0$ is continuous. Thus,  for any $\varepsilon>0$ we can find a neighborhood $V(\theta)$  such that
$$
\E\Big[\inf_{\theta^*\in
V(\theta)}l_0(\theta^*)\Big]\ge \E[l_0(\theta)]+\varepsilon.
$$
From the second assertion of (b) we choose $\varepsilon>0$ such that $\E[l_0(\theta)]+\varepsilon>\E[l_0(\theta_0)]$ and (c) is proved. The end of the proof of the strong consistency is based on a classical compact argument and thus is omitted.
\end{proof}

\subsection{Sufficient conditions for continuous invertibility}
The definition of continuous invertibility does not give any explicit condition. In order to apply Theorem 3.1 in \cite{bougerol:1993}, one classically assumes some uniform Lipschitz conditions on the SRE \eqref{sreg}. Hereafter we will use a continuity argument instead to obtain a tractable sufficient condition of continuous invertibility for general models.\\

Consider some generic function $f:\mathcal K\times \Theta\mapsto \mathcal K$. Assume that there exists a continuous function $\Lambda_{f}$ on $\Theta$ such that for each $x$, $y\in E$ we have
$$
|f(x,\theta)-f(y,\theta)|\le \Lambda_{f}(\theta)|x-y|.
$$
The approach followed by Straumann and Mikosch in \cite{straumann:mikosch:2006} is to consider the SRE \eqref{SREgen} in the complete metric space of continuous functions $\mathcal C_\Theta$ on $\Theta$ with values in $\mathcal K$ equipped with $d(x,y)=\|x-y\|_\Theta$. Straightforward conditions for continuous invertibility are the following ones
\begin{multline}\label{eq:ui}
\E\log^+(\|\phi_0(y,\cdot)\|_\Theta)<\infty \mbox{ for some $y\in \mathcal K$, }\E\log^+(\|\Lambda_{\phi_0} \|_\Theta)<\infty \mbox{ and }  \E\log\big(\big\|\Lambda_{\phi_0^{(r)}} \big\|_\Theta\big)<0.
\end{multline}
The EGARCH(1,1) model satisfies this condition under restrictive assumptions on $\Theta$, for example when $\beta=0$ for any $\theta\in\Theta$, see \cite{straumann:2005}.\\

We collect more general sufficient  conditions for continuous invertibility in the assumption {\bf (CI)}
\begin{description}
\item[(CI)]  $\E\log^+(\|\phi_0(y,\cdot)\|_\Theta)<\infty$ for some $y\in \mathcal K$, $\E\log^+(\|\Lambda_{\phi_0} \|_\Theta)<\infty$ and  $\E\log\big(\Lambda_{\phi_0^{(r)}}(\theta)\big)<0$ for any $\theta\in\Theta$.
\end{description}
The difference with conditions \eqref{eq:ui} is that  the  Lyapunov  condition holds pointwisely and not necessarily uniformly on $\Theta$. Of course {\bf (CI)} is weaker than the uniform Lyapunov condition \eqref{eq:ui}. Due to the regularity of classical models wrt to $\theta$, {\bf (CI)} is satisfied as soon as the model is invertible on $\Theta$ (see Section \ref{sec:sc}  for the EGARCH(1,1) case). Next Theorem proves that the continuity argument is sufficient to assert continuous invertibility:
\begin{Theo}\label{th:cont}
If {\bf (ST)} and {\bf (CI)} hold on some compact set $\Theta$ then the model is continuously invertible on $\Theta$.
\end{Theo}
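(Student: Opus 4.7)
The plan is to reduce the pointwise Lyapunov condition in {\bf (CI)} to a local uniform one on a neighborhood, cover $\Theta$ by finitely many such neighborhoods, and then apply on each of them the functional SRE framework of Straumann and Mikosch \cite{straumann:mikosch:2006} driven by the uniform Lyapunov conditions \eqref{eq:ui}.

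First I would fix $\theta_\star\in\Theta$ and prove that for a small enough open neighborhood $V(\theta_\star)\subset\Theta$,
$$
\E\log\bigl\|\Lambda_{\phi_0^{(r)}}\bigr\|_{V(\theta_\star)}<0.
$$
The map $\theta\mapsto\Lambda_{\phi_0^{(r)}}(\theta)$ is continuous, being an iterated composition built from maps whose Lipschitz coefficients are continuous in $\theta$, so $\|\Lambda_{\phi_0^{(r)}}\|_{V(\theta_\star)}$ decreases a.s. to $\Lambda_{\phi_0^{(r)}}(\theta_\star)$ as $V(\theta_\star)$ shrinks to $\{\theta_\star\}$. The chain-rule bound $\Lambda_{\phi_0^{(r)}}(\theta)\le\prod_{j=0}^r\Lambda_{\phi_{-j}}(\theta)$ yields $\log^+\|\Lambda_{\phi_0^{(r)}}\|_\Theta\le\sum_{j=0}^r\log^+\|\Lambda_{\phi_{-j}}\|_\Theta$, and the right-hand side is integrable by {\bf (CI)}. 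Dominated convergence then gives $\E\log\|\Lambda_{\phi_0^{(r)}}\|_{V(\theta_\star)}\to\E\log\Lambda_{\phi_0^{(r)}}(\theta_\star)<0$, which delivers the required strict negativity for $V(\theta_\star)$ small enough.

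Next, by compactness of $\Theta$, I would extract a finite subcover $\Theta\subset\bigcup_{i=1}^N V(\theta_i)$ of such neighborhoods. The two $\log^+$ conditions of {\bf (CI)} already hold uniformly on $\Theta$ and hence on each $V(\theta_i)$, so on $V(\theta_i)$ the full set of uniform Lyapunov conditions \eqref{eq:ui} is satisfied. Viewing \eqref{SREgen} as an SRE on the complete metric space $\mathcal{C}_{V(\theta_i)}$ with $d(x,y)=\|x-y\|_{V(\theta_i)}$, Theorem~3.1 of \cite{bougerol:1993} in the functional form used in \cite{straumann:mikosch:2006} provides a unique stationary non-anticipative solution $g_t^{(i)}\in\mathcal{C}_{V(\theta_i)}$ together with $\|\hat g_t-g_t^{(i)}\|_{V(\theta_i)}\xrightarrow{e.a.s.}0$. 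Applying Bougerol pointwise under the pointwise Lyapunov condition forces $g_t^{(i)}(\theta)$ to coincide at every $\theta$ with the unique pointwise stationary solution $g_t(\theta)$ of \eqref{sreg}, so the local objects glue into a single $g_t\in\mathcal{C}_\Theta$.

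Finally, since $\|\hat g_t-g_t\|_\Theta\le\max_{1\le i\le N}\|\hat g_t-g_t\|_{V(\theta_i)}$ is a maximum of finitely many a.s. null sequences, it tends to $0$ a.s., which is exactly continuous invertibility on $\Theta$. The main obstacle I anticipate is the first step: the dominated-convergence upgrade from the pointwise Lyapunov inequality to a local-sup version. Without continuity of $\theta\mapsto\Lambda_{\phi_0^{(r)}}(\theta)$ and the integrable majorant supplied by the first two components of {\bf (CI)}, one would be forced to impose the uniform Lyapunov condition \eqref{eq:ui} directly, which is precisely the restrictive assumption the present formulation is designed to circumvent for models such as EGARCH(1,1).
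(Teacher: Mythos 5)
Your proposal is correct and follows essentially the same route as the paper: upgrade the pointwise Lyapunov condition to a local supremum via continuity of $\theta\mapsto\Lambda_{\phi_0^{(r)}}(\theta)$ and a dominated/monotone convergence argument, extract a finite subcover of the compact $\Theta$, apply Bougerol's Theorem 3.1 to the functional SRE on $\mathcal C_{\mathcal V(\theta_i)}$ for each neighborhood, and conclude from $\|\cdot\|_\Theta=\vee_i\|\cdot\|_{\mathcal V(\theta_i)}$. Your explicit chain-rule majorant $\log^+\|\Lambda_{\phi_0^{(r)}}\|_\Theta\le\sum_{j=0}^r\log^+\|\Lambda_{\phi_{-j}}\|_\Theta$ is in fact a useful clarification of the integrability step the paper asserts more tersely.
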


 \begin{proof} For any $\rho>0$, let us write
 $\Lambda_\ast^{(r)}(\theta,\rho)=\sup\{\Lambda_{\phi_0^{(r)}}(\theta^*),\theta^*\in \overline B(\theta,\rho)\cap \Theta\}$, where $\overline B(\theta,\rho)$ stands 
for the closed ball centered at $\theta$ with radius $\rho$. Note that for any $K>0$ we have  $\E[\sup_\Theta|\log \Lambda_{\phi_0}^{(r)}(\theta)\vee K|]<\infty$ because $\E[\sup_\Theta\log^+ \Lambda_{\phi_0^{(r)}}(\theta)]<\infty$ under {\bf (CI)}. Applying the dominated convergence theorem we obtain
$\lim_{\rho\to0}\E[\log \Lambda_{\ast}^{(r)}(\theta,\rho)\vee  K ]=\E[\lim_{\rho\to0}\log \Lambda_{\ast}^{(r)}(\theta,\rho)\vee K]$.
By continuity we have $\lim_{\rho\to0}\log \Lambda_{\ast}^{(r)}(\theta,\rho)= \log \Lambda_{\phi_0^{(r)}}(\theta)$ and for sufficiently small $K<0$
$$
\lim_{\rho\to0}\E[\log \Lambda_{\ast}^{(r)}(\theta,\rho)\vee K]= \E[\log \Lambda_{\phi_0^{(r)}}(\theta)\vee K]<0.
$$
Thus, there exists an $\epsilon>0$ such that 
$$\E[\log \Lambda_{\ast}^{(r)}(\theta,\epsilon)]\le\E[\log \Lambda_{\ast}^{(r)}(\theta,\epsilon)\vee K]<0.$$\

Denote $\mathcal V(\theta)$ the compact neighborhood $\overline B(\theta,\epsilon)\cap \Theta$ of $\theta$. Let us now work on $\mathcal C_{\mathcal V(\theta)}$, the complete metric space of continuous functions from $\mathcal V(\theta)$ to $\mathcal K$ equipped with the supremum norm $\|\cdot\|_{\mathcal V(\theta)}$. In this setting $(\hat g_t)$ satisfies a functional SRE $\hat g_{t+1} = \Phi_t(\hat g_t)$ with Lipschitz coefficients satisfying
\begin{align*}
\Lambda (\Phi_t^{(r)})&\le \sup_{s_1,s_2\in \mathcal C( \mathcal V(\theta))}
\frac{\|\Phi_t^{(r)}(s_1)-\Phi_t^{(r)}(s_2)\|_{\mathcal V(\theta)}}{\|s_1-s_2\|_{\mathcal V(\theta)}}\\
&\le 
\sup_{s_1,s_2\in \mathcal C(\mathcal V(\theta))}\frac{\sup_{\theta^*\in \mathcal V(\theta)}\|\phi_t^{(r)}(s_1(\theta^*),\theta^*)-\phi_t^{(r)}(s_2(\theta^*),\theta^*)\|}{\|s_1-s_2\|_{\mathcal V(\theta)}}\\
 &\le \sup_{s_1,s_2\in \mathcal C(\mathcal V(\theta))}\frac{\sup_{\theta^*\in \mathcal V(\theta)}\Lambda(\phi_t^{(r)}(\cdot,\theta^*))\|s_1(\theta^*)-s_2(\theta^*)\|}{\|s_1-s_2\|_{\mathcal V(\theta)}}\\
 &\le  \sup_{s_1,s_2\in \mathcal C(\mathcal V(\theta))}\frac{\sup_{\theta^*\in \mathcal V(\theta)}\Lambda(\phi_t^{(r)}(\cdot,\theta^*))\|s_1-s_2\|_{\mathcal V(\theta)}}{\|s_1-s_2\|_{\mathcal V(\theta)}}\le  \Lambda_{\ast}^{(r)}(\theta,\epsilon).
\end{align*}
As $\E[\log^+(\|\Phi_0(y)\|_{\mathcal V(\theta)})]\le \E[ \log^+(\|\phi_0(y,\theta)\|_\Theta)] <\infty$, $\E[\log^+( \Lambda(\Phi_0) )]\le \E[ \log^+(\|\Lambda_{\phi_0}(\theta)\|_\Theta)]<\infty$ under {\bf (CI)}   and $ \E(\log \Lambda(\Phi_0^{(r)}))<0$ we can apply Theorem 3.1 of \cite{bougerol:1993}. The unique stationary solution $(g_t)$ exists and satisfies 
\begin{equation}\label{eas}
\|\hat{g}_{t}-g_{t}\|_{\mathcal V(\theta)}\xrightarrow{ {e.a.s.}}0.
\end{equation}
Now, let us remark that  $\Theta=\cup_{\theta\in\Theta}\mathcal V(\theta)$.  As $\Theta$ is a compact set,  there exists a finite number $N$   such that $\Theta=\cup_{k=1}^N \mathcal V(\theta_k)$ and we obtain 
$$
\|\hat{g}_{t}-g_{t}\|_{\mathcal V(\theta_k)}\xrightarrow{{e.a.s.}}0\quad \forall 1\le k\le N.
$$
The desired result follows from the indentity $\|\cdot\|_\Theta=\vee_{k=1}^N\|\cdot\|_{\mathcal V(\theta_k)}.$
\end{proof}

\subsection{Volatility forecasting}

Based on the QMLE $\hat \theta_n$ we deduce a natural forecasting $\hat\sigma_{n+1}^2=\ell(\hat g_n(\hat\theta_n))$ of the volatility $\sigma_{n+1}^2$. It is strongly consistent:

\begin{Theo}\label{pr:vf}
Assume that {\bf (ST)}, {\bf (LB)}, {\bf (ID)} and {\bf (CI)} hold for $\theta_0\in\Theta$. Then  $|\hat\sigma_{n+1}^2- \sigma_{n+1}^2|\xrightarrow{ {a.s.}} 0$  as $n\to\infty$.
\end{Theo}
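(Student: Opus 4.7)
The plan is to combine Theorem~\ref{th:sc} ($\hat\theta_n\to\theta_0$ a.s.), Theorem~\ref{th:cont} (whose proof actually yields the sharper $\|\hat g_t-g_t\|_\Theta\xrightarrow{e.a.s.}0$), and the regularity of $\ell$ from {\bf (LB)}. Starting from the decomposition
\[
|\hat\sigma_{n+1}^2-\sigma_{n+1}^2|\le|\ell(\hat g_n(\hat\theta_n))-\ell(g_n(\hat\theta_n))|+|\ell(g_n(\hat\theta_n))-\ell(g_n(\theta_0))|,
\]
I would transfer each term to the $g$-scale via the Lipschitz property of $\log\ell$, which gives $|\ell(x)-\ell(y)|\le\ell(y)L|x-y|\exp(L|x-y|)$ for $x,y\in\mathcal K$.

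For the first (\emph{approximation}) term, $|\hat g_n(\hat\theta_n)-g_n(\hat\theta_n)|\le\|\hat g_n-g_n\|_\Theta=o(e^{-Cn})$ a.s., while the prefactor $\ell(g_n(\hat\theta_n))$ is controlled, for $n$ large enough that $\hat\theta_n$ lies in a fixed compact neighborhood $\mathcal V(\theta_0)\subset\Theta$, by the stationary process $\sup_{\theta\in\mathcal V(\theta_0)}\ell(g_n(\theta))$. The latter has $\E\log^+<\infty$ under {\bf (CI)} and {\bf (LB)} (by a standard SRE moment bound), hence grows only at the sub-exponential rate $e^{o(n)}$ a.s.\ by a first Borel-Cantelli argument, and the product tends to $0$ a.s. The second (\emph{parameter}) term reduces analogously, using the stationary prefactor $\sigma_{n+1}^2$ with $\E\log^+\sigma_0^2<\infty$ from {\bf (ST)}, to proving the key assertion
\[
|g_n(\hat\theta_n)-g_n(\theta_0)|\to 0\quad\text{a.s.}
\]

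This key assertion is the main obstacle of the proof, because the pathwise continuity of $\theta\mapsto g_n(\theta)$ is not uniform in $n$, and second-Borel-Cantelli applied to the stationary modulus of continuity $\sup_{|\theta-\theta_0|\le\delta}|g_n(\theta)-g_n(\theta_0)|$ actually obstructs, rather than helps, the conclusion. To bypass this I would work on a small compact ball $\mathcal V(\theta_0)=\overline B(\theta_0,\epsilon)\cap\Theta$ on which, by the proof of Theorem~\ref{th:cont}, the functional SRE on $\mathcal C_{\mathcal V(\theta_0)}$ has a strictly negative Lyapunov exponent. The deviation $u_t(\theta)=g_t(\theta)-g_t(\theta_0)$ satisfies a perturbed linear SRE with contractive multiplier $\|\Lambda_{\phi_t}\|_{\mathcal V(\theta_0)}$ and driving noise $M_t=\sup_{\theta\in\mathcal V(\theta_0)}|\phi_t(g_t(\theta_0),\theta)-\phi_t(g_t(\theta_0),\theta_0)|$, which by the continuity of $\phi_0$ in $\theta$ and dominated convergence under {\bf (CI)} can be made arbitrarily small in $\E\log^+$-norm by shrinking $\epsilon$. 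A careful application of Bougerol's theorem to this perturbed SRE, combined with $\hat\theta_n\to\theta_0$ a.s., should then deliver $|g_n(\hat\theta_n)-g_n(\theta_0)|\to 0$ a.s.; making this last step fully rigorous is the delicate point of the argument.
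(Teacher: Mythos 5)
Your setup coincides with the paper's: the same two--term decomposition of $|\hat\sigma_{n+1}^2-\sigma_{n+1}^2|$, the same reduction to the key assertion $|g_n(\hat\theta_n)-g_n(\theta_0)|\to 0$ a.s., and the same perturbed linear SRE for the deviation on a small compact neighborhood $\mathcal V(\theta_0)$ where the Lyapunov exponent of the functional SRE is negative (your treatment of the first term, with the sub-exponential control of the prefactor $\ell(g_n(\hat\theta_n))$, is in fact more explicit than the paper's one-line appeal to continuity of $\ell$). The gap is in the last step, and the mechanism you propose does not close it. Once you replace the driving term by its supremum $M_t=\sup_{\theta\in\mathcal V(\theta_0)}|\phi_t(g_t(\theta_0),\theta)-\phi_t(g_t(\theta_0),\theta_0)|$, the stationary solution of the resulting scalar SRE dominates $\sup_{\theta\in\mathcal V(\theta_0)}|g_t(\theta)-g_t(\theta_0)|$ but is itself stationary, so by ergodicity its $\limsup$ along $t=n$ equals its essential supremum, which is strictly positive for every fixed $\epsilon>0$: this is exactly the obstruction you flagged, and shrinking $\epsilon$ only reduces that $\limsup$, it never makes it zero. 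Making $\E\log^+M_t$ small is likewise insufficient, since log-moments control the growth rate of the series, not its magnitude; and letting $\epsilon=\epsilon_n\to0$ destroys the stationarity needed to invoke Bougerol's theorem at all.

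The paper closes the step by \emph{not} taking the supremum over $\theta$ in the driving term. Keeping $w_t(\theta)=|\phi_t(g_t(\theta_0),\theta)-\phi_t(g_t(\theta_0),\theta_0)|$ as a function of $\theta$ and iterating the linear recursion yields the $\mathcal C_{\mathcal V(\theta_0)}$-valued bound
$$
|g_{tr}(\theta)-g_{tr}(\theta_0)|\;\le\; a(\theta):=\sum_{i=0}^\infty\Lambda\big(\Phi_{tr}^{(r)}\big)\cdots\Lambda\big(\Phi_{(t-i+1)r}^{(r)}\big)\,w_{(t-i)r}(\theta),
$$
whose series of uniform norms converges a.s.\ by Borel--Cantelli, using the e.a.s.\ decay of the products of Lipschitz coefficients and $\E\log^+\|w_0\|_{\mathcal V(\theta_0)}<\infty$. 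Every summand is continuous in $\theta$ and vanishes at $\theta=\theta_0$ because $w_t(\theta_0)=0$; by normal convergence the sum $a$ is therefore continuous with $a(\theta_0)=0$, and $|g_t(\hat\theta_n)-g_t(\theta_0)|\le a(\hat\theta_n)\to0$ follows from the strong consistency of $\hat\theta_n$ established in Theorems \ref{th:sc} and \ref{th:cont}. In short, you must carry the pointwise-in-$\theta$ bound through to the end and exploit that each term of the dominating series vanishes at $\theta_0$, rather than trying to make a $\theta$-uniform stationary bound small; as written, your proof of the key assertion is incomplete.
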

\begin{proof}
Theorems \ref{th:sc} and  \ref{th:cont}  assert the strong consistency of
 $\hat\theta_n$ under the assumption of Theorem \ref{pr:vf}. By continuity of $\ell$ and continuous invertibility on $\Theta$, $|\ell(\hat g_t(\hat\theta_n))-\ell( g_t(\hat\theta_n))|\xrightarrow{ {a.s.}} 0$. Thus, using again the continuity of $\ell$, the result is proved if $|g_t(\hat\theta_n)-g_t(\theta_0)|\xrightarrow{ {a.s.}} 0$. Keeping the notation used in the proofs of  Theorems \ref{th:sc} and  \ref{th:cont}, we have $\hat \theta_n\in \mathcal V(\theta_0)$  for $n$ sufficiently large. The  Lyapunov condition 
$\E[ \log \Lambda (\Phi_0^{(r)})]<0$ is satisfied for the functional SRE $(\Phi_t)$ restricted on $\mathcal V(\theta_0)$. For any $\theta\in \mathcal V(\theta_0)$, $t\in\Z$ we have
$$
|g_{(t+1)r}(\theta)-g_{(t+1)r}( \theta_0)| \le  \Lambda (\Phi_t^{(r)})|g_{t}(\theta)-g_{t}( \theta_0)| +w_t(\theta) $$
where $w_t(\theta)=|\phi_t(g_{t}(\theta_0), \theta )-\phi_t(g_{t}(\theta_0), \theta_0)|$. Thus $|g_{t}(\theta)-g_{t}( \theta_0)|$ is bounded by a linear SRE.   Applying the Borel-Cantelli Lemma as in  \cite{berkes:horvath:kokoszka:2003,straumann:mikosch:2006},  the convergence of the series
 $$
\|g_{tr} -g_{tr}( \theta_0)\|_{\mathcal V(\theta_0)}\le \sum_{i=0}^\infty\Lambda (\Phi_{tr}^{(r)} )\cdots\Lambda (\Phi_{(t-i+1)r}^{(r)})\|w_{(t-i)r}\|_{\mathcal V(\theta_0)}<\infty
$$
follows from the fact that  $\Lambda (\Phi_{tr}^{(r)} )\cdots\Lambda (\Phi_{(t-i+1)r}^{(r)})\xrightarrow{e.a.s.}0$ when $i\to \infty$ and $\E\log^+\|w_{t-i}\|_{\mathcal V(\theta_0)}<\infty$
Thus  the difference $|g_{tr}(\theta) -g_{tr}( \theta_0)|$ is bounding by an a.s. normally convergent function on $\mathcal C_{\mathcal V(\theta_0)}$ that we denote $a(\theta)$. Moreover, as $\lim_{\theta\to\theta_0}w_t(\theta)=0$ for any $t\in\Z$ a.s., we also have $\lim_{\theta\to\theta_0}a_t(\theta)=0$ a.s. from the normal convergence. Finally, using the strong consistency of $\hat\theta_n$, we obtain that
\begin{equation}\label{nt}
|g_{t}(\hat\theta_n)-g_{t}( \theta_0)|\le a(\hat\theta_n)\xrightarrow{ {a.s.}}0\quad \forall n\ge t\quad \mbox{when}\quad t\to\infty.
\end{equation}
\end{proof}

\section{Strong consistency and SQMLE for the EGARCH($1,1$) model}\label{sec:iqmle}

\subsection{Strong consistency of the QMLE when INV($\theta$) is satisfied for any $\theta\in\Theta$}\label{sec:sc}
Recall that $(Z_t)$ is an iid sequence of r.v.  such that $\E (Z_0)=0$ and  $\E(Z_0^2)=1$. 
The EGARCH($1,1$) model introduced by \cite{nelson:1991} is an AR($1$) model for $\log \sigma_t^2$,
$$
X_{t}  =  \sigma_{t}Z_{t}\quad\mbox{with}\quad
\log\sigma_{t}^{2}  =\alpha_0+\beta_0\log\sigma_{t-1}^{2}+\gamma_0 Z_{t}+\delta_0 |Z_{t} |.
$$
The volatility process $( \sigma_t^2)$ exists, is stationary and ergodic as soon as $|\beta_0|<1$ with no other constraint on the coefficients. and {\bf (ST)} holds. However, it does not necessarily 
have finite moment of any order. The model is identifiable as soon as the distribution of $Z_0$ is not concentrated on two points, see \cite{straumann:mikosch:2006}. Let us assume in the rest of the paper these classical conditions satisfied such that assumptions {\bf (ST)}, {\bf (LB)} and {\bf (ID)} automatically hold.\\

The continuous invertibility of the stationary solution of the EGARCH($1,1$) model does not hold on any compact set $\Theta$. Recall that the inverted model is driven by the SRE \eqref{eq:egarchinv}.
Under the constraint $\delta\ge |\gamma|$, we  have that $\gamma X_{t}+\delta |X_{t} |\ge 0$ a.s.. By a straightforward monotonicity argument, we can consider the restriction of the SRE \eqref{eq:egarchinv} on the intervall $\mathcal K\times \Theta$ where $\mathcal K=[\alpha/(1-\beta),\infty)$ and $\delta\ge |\gamma|$ for any $\theta\in\Theta$. By regularity of $\phi_t$ wrt $x$, the Lipschitz coefficients are computed using the first partial derivative:
$$
\Lambda(\phi_{t}(\cdot,\theta ))\le \max \{\beta ,2^{-1}(\gamma  X_{t}+\delta |X_{t}|)\exp(-2^{-1}\alpha /(1-\beta ))-\beta\}.
$$
This Lipschitz coefficient is continuous in $\theta$ and thus coincides with $\Lambda_{\phi_t}(\theta)$. The assumption {\bf (CI)} is satisfied if INV($\theta$) is satisfied for any $\theta\in\Theta$ as the uniform log moments exists by continuity and because $\E[ \log^+ X_0] <\infty$. As {\bf (ST)} is also satisfied, an application of Theorem \ref{th:cont} asserts the continuous invertibility on any compact sets $\Theta$ such that INV($\theta$) is satisfied for any $\theta\in\Theta$. \\
 
Remark that the domain of invertibility represented in
Figure \ref{fig:Beta} does not coincide with the domain of continuous invertibility, i.e. the set of $\theta\in\R^4$ satisfying INV($\theta$) for an EGARCH(1,1) DGP with  $\theta_0\in \R^4$. This situation is more complicated to represent as the domain INV($\theta$) depends on the fixed value $\theta_0$ and   on the parameter $\alpha$ when $\theta\neq\theta_0$. However, for any $\theta_0$ in the invertibility domain represented in Figure \ref{fig:Beta}, there exists some compact neighborhood $\Theta$ of continuous invertibility. For the QMLE constrained to such compact set $\Theta$, an application of Theorems \ref{th:sc}, \ref{th:cont} and \ref{pr:vf} gives directly
\begin{Theo}\label{cor:sc}
Consider the EGARCH(1,1) model. If INV($\theta$) is satisfied for any $\theta\in\Theta$ and $\theta_0\in {\Theta}$ then $\hat\theta_n\to\theta_0$ and $\hat\sigma_n^2-\sigma_n^2\to 0$ a.s. as $n\to \infty$ with $\hat\sigma^2_n=\exp(\hat g_n(\hat \theta_n))$ for any initial value $\hat\sigma_0^2$.
\end{Theo}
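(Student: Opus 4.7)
The plan is to apply Theorems \ref{th:sc}, \ref{th:cont}, and \ref{pr:vf} in sequence. As noted in the discussion preceding the statement, the three structural assumptions \textbf{(ST)}, \textbf{(LB)} and \textbf{(ID)} are automatic for the EGARCH(1,1) model under $|\beta_0|<1$ and the standing hypotheses on the innovations, so the only real work is to verify the continuous invertibility condition \textbf{(CI)} on the compact set $\Theta$.

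First I would fix a forward-invariant state space $\mathcal K$ that works uniformly across $\theta\in\Theta$. Because $\delta\ge|\gamma|$ throughout $\Theta$ one has $\gamma X_t+\delta|X_t|\ge 0$ a.s., and the monotonicity argument sketched just before the theorem applies: any half-line $\mathcal K=[c,\infty)$ with $c$ no larger than $\inf_{\theta\in\Theta}\alpha/(1-\beta)$ is mapped into itself by every $\phi_t(\cdot,\theta)$. On such $\mathcal K$ the inverted map $\phi_t(x,\theta)=\alpha+\beta x+(\gamma X_t+\delta|X_t|)\exp(-x/2)$ is smooth in $x$, and bounding $|\partial_x\phi_t|$ yields the Lipschitz estimate
$$
\Lambda_{\phi_t}(\theta)\le \max\bigl\{\beta,\ 2^{-1}(\gamma X_t+\delta|X_t|)\exp\bigl(-2^{-1}\alpha/(1-\beta)\bigr)-\beta\bigr\},
$$
which is jointly continuous in $\theta$.

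Second, I would verify the three clauses of \textbf{(CI)} with $r=1$. The Lyapunov condition $\E\log\Lambda_{\phi_0}(\theta)<0$ is precisely INV($\theta$) and therefore holds pointwise on $\Theta$ by hypothesis. For the two uniform log-moment bounds $\E\log^+\|\phi_0(y,\cdot)\|_\Theta<\infty$ for some $y\in\mathcal K$ and $\E\log^+\|\Lambda_{\phi_0}\|_\Theta<\infty$, compactness of $\Theta$ and continuity in $\theta$ of the displayed expressions reduce the matter to the single bound $\E\log^+|X_0|<\infty$, which follows from $\E\log^+\sigma_0^2<\infty$ (given by \textbf{(ST)}) combined with $\E Z_0^2=1$.

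Finally, with \textbf{(CI)} in hand, Theorem \ref{th:cont} delivers continuous invertibility of the EGARCH(1,1) model on $\Theta$; Theorem \ref{th:sc} then gives $\hat\theta_n\to\theta_0$ a.s.; and Theorem \ref{pr:vf} yields $\hat\sigma_n^2-\sigma_n^2\to 0$ a.s. regardless of the initial value $\hat\sigma_0^2$. There is no deep obstacle here---the statement is essentially a corollary of the general machinery of Sections \ref{sec:ci} and \ref{sec:sc}---but the one subtlety worth flagging is the uniform choice of the invariant domain $\mathcal K$ in the first step, since that is precisely what allows the pointwise INV($\theta$) assumption to be promoted, via Theorem \ref{th:cont}, to the continuous invertibility of the functional SRE on $\mathcal C_\Theta$.
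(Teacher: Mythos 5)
Your proposal is correct and follows essentially the same route as the paper: check that \textbf{(ST)}, \textbf{(LB)}, \textbf{(ID)} hold automatically, use the derivative bound on $\mathcal K=[c,\infty)$ to identify $\Lambda_{\phi_0}(\theta)$ with the quantity appearing in INV($\theta$) (so that the pointwise Lyapunov clause of \textbf{(CI)} with $r=1$ is exactly the hypothesis), get the uniform log-moments from compactness and $\E\log^+|X_0|<\infty$, and then invoke Theorems \ref{th:cont}, \ref{th:sc} and \ref{pr:vf}. Your explicit uniform choice $c\le\inf_{\theta\in\Theta}\alpha/(1-\beta)$ for the invariant domain is a slightly more careful rendering of the paper's $\mathcal K=[\alpha/(1-\beta),\infty)$, but the argument is the same.
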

Remark we extend the result of  \cite{straumann:2005,straumann:mikosch:2006} under the sufficient condition \eqref{eq:ui} that expresses in the EGARCH(1,1) model as
$$
\E[\sup_\Theta\log(\max\{ \beta , 2^{-1}(\gamma  X_{t-1}+\delta |X_{t-1}|)\exp(-2^{-1}\alpha /(1-\beta ))-\beta \})]<0.
$$
This more restrictive condition is less explicit than \eqref{eq:condtheta}  because it is a Lyapunov condition uniform on $\Theta$. It is difficult to check in practice when $\beta\neq0$ for some $\theta\in\Theta$.

\subsection{The Stable QMLE (SQMLE)}
Noe that the procedure is valid only if the invertibility condition INV($\theta$) is satisfies for any $\theta\in\Theta$. Thus, we want to constrain the optimization of the QL on a continuously invertible domain. However, the condition \eqref{eq:condtheta} depends on the distribution of $X_0$ and on the unknown parameter $\theta_0$ that drives the DGP. We propose to constrain the optimization of the QL under the empirical constraint\\

$\widehat{\mbox{INV}(\theta)}$: $\delta\ge |\gamma|$ and 
$
\sum_{t=1}^n\log(\max\{\beta,2^{-1}(\gamma X_{t}+\delta|X_t|)\exp(-2^{-1}\alpha/(1-\beta))-\beta\})\le -\varepsilon.
$\\

We introduce artificially $\varepsilon>0$ (as small as we want) such that $\widehat{\mbox{INV}(\theta)}$ defines a closed set on $\R^4$. This set is non empty because the constraint is satisfied for $\theta\in\R^4$ such that $\beta=0$ and the parameters $\delta\ge |\gamma|$ and $\alpha$ are sufficiently small.
\begin{Def}
The SQMLE is the $M$-estimator
$$
\hat\theta_{n}^S=\mbox{argmin}_{\theta\in \Theta^S} \sum_{t=1}^{n}2^{-1}\left(X_{t}^2\exp(-\hat g_t(\theta)/2)+ \hat{g}_{t}(\theta)\right)
$$
where $\Theta^S=\{\theta\in\Theta\mbox{ satisfying }\widehat{\mbox{INV}(\theta)}\}$ for any compact set $\Theta$.
\end{Def}
Consider in the sequel that $\Theta^S$ is non empty. It is always the case in practice where we use a steepest descent algorithm on the constraints $\widehat{\mbox{INV}(\theta)}$ and some maximum number of iterations.
The following theorem gives the strong consistency of the SQMLE for the  EGARCH(1,1) model if INV($\theta_0$) is satisfied and $\theta_0\in\Theta$. It also shows that the volatility forecasting 
using the SQMLE does not depend on the arbitrary choice of the initial value even when INV($\theta_0$) is not satisfied. Thus the  SQMLE is more reliable than the QMLE for which chaotic behavior of the volatility forecasting described in \cite{sorokin:2011} can occur if INV($\theta$) is not satisfied for some $\theta\in\Theta$.
\begin{Theo}
If INV($\theta_0 $) and $\theta_0\in\Theta$  then $\hat\theta_{n}^S\xrightarrow{a.s.}\theta_0$. If INV($\theta_0$) is not satisfied, the asymptotic law of $\hat \sigma^2_t=\exp(\hat g_t(\hat\theta_{n}^S))$ still does not depend on the initial value $\hat \sigma^2_0$.
\end{Theo}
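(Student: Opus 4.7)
The strategy is to combine a uniform ergodic analysis of the random constraint set $\Theta^S$ with the continuous invertibility framework of Theorems~\ref{th:sc} and~\ref{th:cont}. For brevity, write
\[
\lambda_t(\theta) := \log\bigl(\max\{\beta,2^{-1}(\gamma X_t+\delta|X_t|)\exp(-\alpha/(2(1-\beta)))-\beta\}\bigr),
\]
so that $\widehat{\mathrm{INV}(\theta)}$ becomes ``$\delta\ge|\gamma|$ and $\sum_{t=1}^n\lambda_t(\theta)\le-\varepsilon$'', while INV($\theta$) reads ``$\delta\ge|\gamma|$ and $\E[\lambda_0(\theta)]<0$''.

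For the first assertion, I would proceed in three steps. \emph{Step 1 (feasibility of $\theta_0$).} The pointwise ergodic theorem applied to the stationary ergodic sequence $(\lambda_t(\theta_0))_t$ yields $\frac{1}{n}\sum_{t=1}^n\lambda_t(\theta_0)\to \E[\lambda_0(\theta_0)]<0$ a.s., so the cumulative sum tends to $-\infty$ a.s. and $\theta_0\in\Theta^S$ for all $n$ large enough. Hence $\hat L_n(\hat\theta_n^S)\le\hat L_n(\theta_0)$ eventually, and by Theorem~\ref{cor:sc} (or its ingredients) $\hat L_n(\theta_0)\to \E[l_0(\theta_0)]$ a.s. \emph{Step 2 (localisation of cluster points).} Using the continuity of $\theta\mapsto\lambda_t(\theta)$ and the integrable envelope $\sup_{\theta\in K}\lambda_t(\theta)^+\le C_K+\log^+|X_t|$ on each compact $K\subset\Theta$, a standard bracketing argument upgrades the pointwise ergodic theorem to the uniform statement $\sup_{\theta\in K}|\frac{1}{n}\sum_{t=1}^n\lambda_t(\theta)-\E[\lambda_0(\theta)]|\to 0$ a.s. Consequently any subsequential limit $\theta^*$ of $(\hat\theta_n^S)$ satisfies $\E[\lambda_0(\theta^*)]\le 0$. \emph{Step 3 (identification).} For $\theta^*$ in the interior $\{\E[\lambda_0(\cdot)]<0\}$, Theorem~\ref{th:cont} provides continuous invertibility on a compact neighborhood $\mathcal V(\theta^*)\subset\Theta$, so the arguments (b)--(c) in the proof of Theorem~\ref{th:sc} apply: if $\theta^*\neq\theta_0$, one obtains $\liminf_k\hat L_{n_k}(\hat\theta_{n_k}^S)>\E[l_0(\theta_0)]$, contradicting Step~1. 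Combined with compactness, this forces $\hat\theta_n^S\to\theta_0$ a.s.

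For the second assertion, fix two initial conditions $\hat g_0,\tilde g_0\in\mathcal C_\Theta$ and the corresponding iterations $\hat g_t(\theta),\tilde g_t(\theta)$. The pointwise Lipschitz bound $\Lambda(\phi_t(\cdot,\theta))\le e^{\lambda_t(\theta)}$ combined with the constraint defining $\Theta^S$ gives, for every $\theta\in\Theta^S$,
\[
\bigl|\hat g_n(\theta)-\tilde g_n(\theta)\bigr|\le\exp\Bigl(\sum_{t=1}^{n}\lambda_t(\theta)\Bigr)\bigl|\hat g_0(\theta)-\tilde g_0(\theta)\bigr|\le e^{-\varepsilon}\bigl|\hat g_0(\theta)-\tilde g_0(\theta)\bigr|.
\]
Evaluating at the random point $\theta=\hat\theta_n^S$ and invoking Step~2 (valid regardless of whether INV($\theta_0$) holds, as it only uses the empirical constraint defining $\Theta^S$) shows that along any convergent subsequence $\hat\theta_{n_k}^S\to\theta^*$, we have $\E[\lambda_0(\theta^*)]\le 0$, and by uniform convergence the sum $\sum_{t=1}^{n_k}\lambda_t(\hat\theta_{n_k}^S)$ diverges to $-\infty$ at a linear rate whenever $\E[\lambda_0(\theta^*)]<0$. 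This forces exponential forgetting of the initial condition, so the asymptotic law of $\hat\sigma_n^2=\exp(\hat g_n(\hat\theta_n^S))$ coincides with that obtained from the stationary solution $g_n(\theta^*)$ and does not depend on $\hat\sigma_0^2$.

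The main obstacle is the boundary analysis: cluster points $\theta^*$ of $\hat\theta_n^S$ lying on the critical level set $\{\E[\lambda_0(\cdot)]=0\}$ are not excluded by Step~2 alone, and Bougerol's Lyapunov theorem does not directly give continuous invertibility there. The resolution is to exploit the strict threshold $-\varepsilon$ in $\widehat{\mathrm{INV}}$ together with the uniform ergodic rate from Step~2 to keep $\hat\theta_n^S$ eventually in $\{\E[\lambda_0(\cdot)]\le -\eta\}$ for some $\eta>0$ (possibly random), so that all cluster points fall in the interior and the argument of Step~3 applies.
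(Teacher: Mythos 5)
Your overall strategy is the paper's: the heart of both arguments is a uniform (over the compact $\Theta$) law of large numbers for $\lambda_t(\theta)=\log\Lambda_t(\theta)$, which identifies the limit of the random constraint set $\Theta^S$ with a deterministic, continuously invertible compact set containing $\theta_0$, after which Theorem~\ref{cor:sc} gives consistency and the stability of the forecast. The paper obtains this ULLN by checking $\E\|\log\Lambda_0\|_\Theta<\infty$ and invoking the ergodic theorem for $\mathcal C(\Theta)$-valued random elements, where you use bracketing with the envelope $\log^+|X_t|$; these are interchangeable. Your Step~3 (cluster-point analysis re-running items (b)--(c) of the proof of Theorem~\ref{th:sc}) is heavier than needed: once $\Theta^S$ is shown to eventually contain $\theta_0$ and to be eventually contained in a fixed compact on which INV($\theta$) holds everywhere, Theorem~\ref{cor:sc} applies verbatim, which is exactly how the paper concludes.

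Two steps of your write-up do not work as stated. First, in the second assertion the chain $|\hat g_n(\theta)-\tilde g_n(\theta)|\le\exp(\sum_{t\le n}\lambda_t(\theta))\,|\hat g_0(\theta)-\tilde g_0(\theta)|\le e^{-\varepsilon}|\hat g_0(\theta)-\tilde g_0(\theta)|$ only yields contraction by the \emph{fixed} factor $e^{-\varepsilon}$, which does not tend to $0$ as $n\to\infty$; forgetting of the initial value requires $\sum_{t\le n}\lambda_t(\hat\theta_n^S)\to-\infty$, i.e.\ you must again place the relevant parameters strictly inside $\{\E\lambda_0<0\}$, which is precisely what the uniform ergodic theorem (not the empirical constraint alone) delivers. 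Second, your proposed cure for the boundary set $\{\E\lambda_0=0\}$ is circular: the constraint $\sum_{t=1}^n\lambda_t(\theta)\le-\varepsilon$ bears on the \emph{sum}, hence only forces $n^{-1}\sum_{t=1}^n\lambda_t(\theta)\le-\varepsilon/n\to0$ and cannot confine cluster points to $\{\E\lambda_0\le-\eta\}$ for a fixed $\eta>0$. You are right that this boundary is the delicate point --- the paper itself passes over it by asserting that $\Theta^S$ ``coincides asymptotically'' with a continuously invertible compact --- but the fix you offer does not close it; an honest treatment has to either shrink $\Theta$ away from the critical level set or show directly that near-critical parameters cannot asymptotically minimize the quasi-likelihood.
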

\begin{proof}
Denote $ \Lambda_n(\theta)=\max\{ \beta , 2^{-1}(\gamma  X_{n}+\delta |X_{n}|)\exp(-2^{-1}\alpha /(1-\beta ))-\beta \}$. We will prove that
$
 \|n^{-1} \log \Lambda_n(\theta) - \E\log\Lambda_0(\theta)  \|_{\Theta}\xrightarrow{  {a.s.}} 0$ as $n\to \infty$. Then, if INV($\theta_0$) is satisfied and $\theta_0\in\Theta$, $\Theta^S$ coincides  asymptotically a.s. to a compact (because bounded and close) continuously invertible domain containing $\theta_0$. An application of Theorem \ref{cor:sc} yields the strong consistency of the SQMLE. The second assertion follows from the fact that asymptotically INV($\hat\theta_{n}^S$) is satisfied even if $\theta_0\notin\Theta^S$.\\

Let us prove that $
 \|n^{-1} \log \Lambda_n(\theta) - \E\log\Lambda_0(\theta)  \|_{\Theta}\xrightarrow{  {a.s.}} 0$. Note that $\log \Lambda_n(\theta)$ is a random element in the Banach space $\mathcal C(\Theta)$.  The desired result is a consequence of the ergodic theorem if 
$
\E\|\log\Lambda_0\|_\Theta<\infty.
$
Since $$\|\log\Lambda_0\|_\Theta\le \max\{\|\log |\beta|\|_\Theta, \|\log |2^{-1}(\gamma  X_{0}+\delta |X_{0}|)\exp(-2^{-1}\alpha /(1-\beta ))-\beta|\|_\Theta\}$$  and $\Theta$ is a compact set, the desired result follows by continuity and the dominated convergence theorem as $\E\log|X_0|=\E\log\sigma_0+\E\log|Z_0|<\infty$. 
\end{proof}
The advantage of the SQMLE  $\hat\theta_n^{S}$ is that the procedure is stable wrt the choice of the initial value, whereas the QMLE is not stable if there is one $\theta\in\Theta$ satisfiying the non invertibility condition of \cite{sorokin:2011}. The stabilization procedure relies on the explicit continuous invertibility condition {\bf (CI)}. The expression of the sufficient constraint $\widehat{\mbox{INV}(\theta)}$ is specific to the EGARCH(1,1) model. We believe that this constraint is not sharp; it should be possible to improve the invertibility condition INV($\theta$) of \cite{straumann:2005}, thus to improve {\bf (CI)}, to extend the constraint $\widehat{\mbox{INV}(\theta)}$ and finally to obtain more general SQMLE. Remark that such stabilization of the QMLE should be done before using this classical estimator on models that are not everywhere invertible nor explosive. 

\section{Asymptotic normality of the SQMLE in the EGARCH(1,1) model}\label{sec:an}

In this section we extend the result of Theorem 5.7.9 of \cite{straumann:2005}  to non-degenerate cases when $\beta_0\neq0$.  We obtain the asymptotic normality of the SQMLE without assuming  uniform moment on the compact set $\Theta$ for the likelihood and its derivatives. However, we assume the additional condition 
\begin{description}
\item[(MX)] The EGARCH(1,1) volatility DGP $(\sigma_t^2)$ is geometrically ergodic.
\end{description}
The geometric ergodicity is a classical assumption in the context of Markov chains, see \cite{meyn:tweedie:1993}. Condition {\bf (MX)} is satisfied if $Z_0$ is nonsingular w.r.t. the Lebesgue measure on $\R$, see \cite{alsmeyer:2003}. Geometric ergodicity is equivalent to the strong mixing property of the DGP with a geometric rate of decrease of the coefficients. The notion of geometric strongly mixing is not restricted to the case of Markov chains. As we will use it in the proof, let us recall that it is equivalent to 
the existence of $0<a<1$ and $b>0$ such that
$\alpha_r\le ba^r$ for all $r\ge 1$. Here the $\alpha_r$, $r\ge1$,  are the strong mixing coefficients defined by \cite{rosenblatt:1956} as 
$$\alpha_r=\sup_{A\in \sigma(\ldots,X_{-1},X_0)\,,B\in
  \sigma(X_r,X_{r+1},\ldots)}
\left|\P(A\cap B)-\P(A)\,\P(B)\right|. 
$$

Let us also assume the finite moments assumption which is necessary and sufficient for the existence of the asymptotic covariance matrix, see Lemma \ref{lem:mom} below:
\begin{description}
\item[(MM)]  $\E[Z_0^4]<\infty$ and $\E[(\beta_0 -2^{-1}(\gamma_0 Z_0+\delta_0\left|Z_0\right|))^2]<1$.
\end{description}

\begin{Theo}\label{anegarch}
Assume that the conditions of Theorem \ref{cor:sc} are satisfied, that $\theta_0\in \stackrel{\circ}{\Theta}$ and that {\bf (MX)} and {\bf (MM)} hold. Then  $\sqrt n(\hat\theta_n^{S}-\theta_0)\xrightarrow{d}\mathcal N(0,{\bf \Sigma})$ where ${\bf \Sigma}$ is an invertible matrix.
\end{Theo}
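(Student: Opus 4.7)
The plan is to follow the standard M-estimation route: Taylor-expand the first-order condition around $\theta_{0}$, obtain a CLT for the normalized score, and a LLN for the Hessian near $\theta_{0}$; the twist is that without a uniform-moment assumption on $\Theta$ we must control the replacement of $\hat{L}_{n}$ by $L_{n}$ and the intermediate-point Hessian via SREs satisfied by the relevant \emph{differences}, exactly as the introduction advertises. First, since $\theta_{0}\in\stackrel{\circ}{\Theta}$ and $\hat\theta_{n}^{S}\to\theta_{0}$ a.s.\ by Theorem \ref{cor:sc}, the boundary constraint $\widehat{\mathrm{INV}(\theta)}$ is not binding eventually (because $\mathrm{INV}(\theta_{0})$ holds strictly), so for $n$ large one has $\nabla\hat{L}_{n}(\hat\theta_{n}^{S})=0$. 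A mean-value expansion then yields
\begin{equation*}
\sqrt{n}(\hat\theta_{n}^{S}-\theta_{0})=-\Bigl(\int_{0}^{1}\nabla^{2}\hat{L}_{n}(\theta_{0}+s(\hat\theta_{n}^{S}-\theta_{0}))\,ds\Bigr)^{-1}\sqrt{n}\,\nabla\hat{L}_{n}(\theta_{0}).
\end{equation*}

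For the score, differentiating the EGARCH inverted SRE~\eqref{egarchinv} shows that $\nabla g_{t}(\theta)$ satisfies a linear SRE with driving coefficient $A_{t}(\theta)=\beta-2^{-1}(\gamma X_{t}+\delta|X_{t}|)\exp(-g_{t}(\theta)/2)$. Evaluated at $\theta_{0}$ this collapses to $A_{t}(\theta_{0})=\beta_{0}-2^{-1}(\gamma_{0}Z_{t}+\delta_{0}|Z_{t}|)$, and the second condition in {\bf (MM)} is precisely $\E[A_{0}(\theta_{0})^{2}]<1$, which gives a stationary $L^{2}$ solution for $\nabla g_{t}(\theta_{0})$. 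Since at $\theta_{0}$ the QML simplifies to $l_{t}(\theta_{0})=\log\sigma_{t}^{2}+Z_{t}^{2}$, one obtains $\nabla l_{t}(\theta_{0})=\nabla g_{t}(\theta_{0})(1-Z_{t}^{2})$ (up to the $\ell'/\ell$ factor which is constant for EGARCH), so $(\nabla l_{t}(\theta_{0}),\mathcal F_{t})$ is a stationary ergodic square-integrable martingale difference sequence provided $\E[Z_{0}^{4}]<\infty$, the first part of {\bf (MM)}. The martingale CLT then gives $\sqrt{n}\,\nabla L_{n}(\theta_{0})\Rightarrow\mathcal N(0,I(\theta_{0}))$. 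It remains to show $\sqrt{n}\|\nabla\hat{L}_{n}(\theta_{0})-\nabla L_{n}(\theta_{0})\|=o_{\P}(1)$: the difference $\nabla\hat g_{t}-\nabla g_{t}$ satisfies a perturbed linear SRE with the same Lyapunov coefficient, hence converges e.a.s.\ to $0$, and the resulting telescoping Cesàro mean is absolutely summable a.s.

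For the Hessian, a second differentiation produces an affine SRE for $\nabla^{2}g_{t}(\theta_{0})$ driven by the same $A_{t}(\theta_{0})$, so {\bf (MM)} again guarantees an $L^{1}$ stationary solution and the Birkhoff ergodic theorem yields $\nabla^{2}L_{n}(\theta_{0})\to J(\theta_{0})$ a.s. The intermediate point $\theta_{0}+s(\hat\theta_{n}^{S}-\theta_{0})$ lies in $\mathcal V(\theta_{0})$ for $n$ large by strong consistency, and rather than invoking a uniform SLLN on $\Theta$ (which would need uniform moments we are not assuming), I would write SREs for the differences $\nabla^{2}g_{t}(\theta)-\nabla^{2}g_{t}(\theta_{0})$ and $\nabla^{2}\hat g_{t}-\nabla^{2}g_{t}$; combined with geometric mixing from {\bf (MX)} and continuity in $\theta$, both vanish in probability along the path $\theta_{0}+s(\hat\theta_{n}^{S}-\theta_{0})$. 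Invertibility of $J(\theta_{0})$ reduces to showing that no linear combination of the components of $\nabla g_{0}(\theta_{0})$ vanishes a.s., which follows from {\bf (ID)} together with the nonsingularity of $Z_{0}$ implicit in {\bf (MX)}.

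The main obstacle is step three: without uniform moments on $\Theta$, one cannot blindly appeal to a uniform SLLN on a neighborhood of $\theta_{0}$ to handle the intermediate-point Hessian. The workaround — which is the novel ingredient announced in the introduction — is to write an SRE for $\nabla^{2}\hat g_{t}(\theta^{*})-\nabla^{2}g_{t}(\theta_{0})$ (with $\theta^{*}$ depending on $t$ through $\hat\theta_{n}^{S}$), to bound its driving Lipschitz coefficient by $\Lambda_{\ast}^{(r)}(\theta_{0},\epsilon)$ as in Theorem~\ref{th:cont}, and then to use the e.a.s.\ convergence of $\hat\theta_{n}^{S}$ to $\theta_{0}$ together with geometric mixing to get the uniform bound only in the directions needed for the Taylor remainder. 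Once $\nabla^{2}\hat L_{n}(\tilde\theta_{n})\to J(\theta_{0})$ is established, Slutsky's theorem delivers $\sqrt{n}(\hat\theta_{n}^{S}-\theta_{0})\Rightarrow\mathcal N(0,J(\theta_{0})^{-1}I(\theta_{0})J(\theta_{0})^{-1})$, giving $\boldsymbol{\Sigma}$ explicitly.
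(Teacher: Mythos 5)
Your proposal follows essentially the same route as the paper: a Taylor expansion of the score around $\theta_0$, a martingale CLT for $\nabla L_n(\theta_0)$ under \textbf{(MM)} (whose second condition is exactly $\E[A_0(\theta_0)^2]<1$ for the linear SRE of $\nabla g_t(\theta_0)$), an ergodic LLN for the Hessian at $\theta_0$, invertibility of ${\bf J}$ via linear independence of the components of $\nabla g_0(\theta_0)$, and---crucially---SREs for the differences $\nabla^2 g_t(\theta)-\nabla^2 g_t(\theta_0)$ and $\nabla^2\hat g_t-\nabla^2 g_t$ in place of a uniform SLLN, with \textbf{(MX)} feeding the mixing/e.a.s.\ machinery. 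One small correction: you invoke ``the e.a.s.\ convergence of $\hat\theta_n^S$ to $\theta_0$,'' which is neither available (only a.s.\ consistency is proved) nor needed; the paper's Lemma \ref{lem:appr} bounds $n^{-1}\sum_t\|\mathbb H g_t(\tilde\theta_n)-\mathbb H g_t(\theta_0)\|$ by $a\|\tilde\theta_n-\theta_0\|+b(\tilde\theta_n)+c_n$ with $b$ a random continuous function vanishing at $\theta_0$, so continuity plus a.s.\ consistency suffice---exactly the mechanism you also describe one sentence earlier.
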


\begin{proof}
As the SQMLE is asymptotically a.s. equivalent to the QMLE under the assumptions of Theorem \ref{cor:sc} we will prove the result only for this last estimator. We first prove that Assumption {\bf (MM)} yields the existence of the asymptotic variance  
$${\bf \Sigma}={\bf J}^{-1}{\bf I} {\bf J}^{-1}$$
with ${\bf J}=\E[\mathbb H \ell_0(\theta_0)]$
and ${\bf I}=\E[\nabla \ell_0(\theta_0)\nabla \ell_0(\theta_0)^T]$, where $\mathbb H \ell_0(\theta_0)$ and $\nabla \ell_0(\theta_0)$ are the Hessian matrix and the gradient vector of $\ell_0$ at the point $\theta_0\in \stackrel{\circ}{\Theta}$. 

\begin{lem}\label{lem:mom}
Under condition {\bf(MM)} the covariance matrix  ${\bf \Sigma}$ exist and it is  an invertible matrix.
\end{lem}
The proof of this lemma is given in Section \ref{sec:pr}. Then we prove that the functions $\hat g_t$ and $g_t$ are twice continuously differentiable refining the arguments developed in \cite{straumann:mikosch:2006}
\begin{lem}\label{lem:der}
Under the assumptions of Theorem \ref{cor:sc} the functions $\hat g_t$ and $g_t$ are twice continuously differentiable on a compact neighborhood $\mathcal V( \theta_0)$ of $\theta_0\in\stackrel{\circ}{\Theta}$ and $\|\nabla \hat g_t-\nabla  g_t\|_{\mathcal V( \theta_0)}+\|\mathbb H\hat g_t-\mathbb H  g_t\|_{\mathcal V( \theta_0)}\xrightarrow{e.a.s.}0$.
\end{lem}
The proof of this lemma is given in Section \ref{sec:pr}.  The asymptotic normality follows from the Taylor development used in Section 5 of Bardet and Wintenberger \cite{bardet:wintenberger:2009} on the partial derivatives $\nabla_i$ of the real valued function $L_n$:
$$
\nabla_iL_n(\hat\theta_n)-\nabla_iL_n( \theta_0)=\mathbb H L_n(\tilde\theta_{n,i})(\hat\theta_n-\theta_0)\quad\mbox{for all}~1\le i\le d.
$$
Then the asymptotic normality follows from the following sufficient conditions:
\begin{enumerate}
\item[(a)]  $n^{-1/2}\nabla L_n(\theta_0)\to \mathcal N(0,{\bf I})$,
\item[(b)]  $\|n^{-1}\mathbb H L_n(\tilde\theta_n)-{\bf J}\|\xrightarrow{a.s.}0$ for any sequence $(\tilde \theta_n)$ converging a.s. to $\theta_0$ and  ${ \bf J}$  is invertible,
\item[(c)]  $ n^{-1/2}\|\nabla \hat L_n(\hat\theta_n)-\nabla L_n(\hat\theta_n)\|$
  converges a.s. to $0$.
\end{enumerate}
Note that 
$$\nabla L_n(\theta_0)=\sum_{i=t}^n4^{-1} \nabla g_{t}(\theta_{0})(1-\Z_{t}^{2}) $$
is a martingale as $\nabla g_{t}(\theta_{0})$ is independent of $Z_t$ for all $t\ge 1$ and $\E[1-\Z_{t}^{2})]=0$ by assumption. Under {\bf (MM)}, this martingale has finite moments of order 2 due to Lemma \ref{lem:mom}. An application of the CLT for differences of martingales of \cite{billinsgley:1999} yields (a).\\

The following Lemma is used to prove (b) without uniform moment assumption   on the compact set $\Theta$ for the likelihood and its derivatives
\begin{lem}\label{lem:appr}
Under the assumptions of Theorem \ref{anegarch} we have $\|\mathbb H L_n(\tilde\theta_n)-\mathbb H L_n(\theta_0)\|\xrightarrow{a.s.} 0$ for any sequence $(\tilde \theta_n)$ converging a.s. to $\theta_0$ when $n\to\infty$.
\end{lem}
The proof of this lemma is given in Section \ref{sec:pr}. Applying Lemma \ref{lem:appr}, it is sufficient to prove that  $\|n^{-1}\mathbb H L_n(\theta_0)-{\bf J}\|\xrightarrow{a.s.}0$ to obtain that
$\|n^{-1}\mathbb H L_n(\tilde\theta_n)-{\bf J}\|\xrightarrow{a.s.}0$.
The ergodic Theorem applied to the process $(\mathbb H l_t(\theta_0))$ (integrable under {\bf (MM)}) yields $\|n^{-1}\mathbb H L_n(\tilde\theta_n)-{\bf J}\|\xrightarrow{a.s.} 0$. Thus the first assertion of (b) is proved. The fact that ${\bf J}$ is an invertible matrix is already known, see Lemma \ref{lem:mom}.\\

Finally (c) is obtained by using the exponential decrease of the approximation  \eqref{eas}  that holds uniformly on some compact neighborhood  $\mathcal V(\theta_0)$ of $\theta_0$ and the identity 
$$\nabla L_n =\sum_{i=t}^n4^{-1} \nabla g_{t} (1-X_{t}^{2}\exp(-g_t)). $$
Note that here we use the fact that $\exp(-x)$ is a Lipschitz function on $[c,\infty)$ where $c:=\min_{\mathcal V(\theta_0)} \alpha/(1-\beta)$.
\end{proof}

\section{Proofs of the technical lemmas}\label{sec:pr}

\subsection{Proof of Lemma \ref{lem:mom}}

Let us denote $U_{t}=(1,\log\sigma_{t}^{2},Z_{t},|Z_{t}|)$ and $V_{t}=\beta_0 -2^{-1}(\gamma_0 Z_{t}+\delta_0\left|Z_{t}\right|)$.
 Then $(\nabla g_{t}(\theta_{0}))$ is the solution of the linear SRE
$$
\nabla g_{t+1}(\theta_{0})=U_{t}+V_{t}\nabla g_{t}(\theta_{0}),\qquad \forall t\in\Z.$$
Let us consider the process $Y_t=(\nabla g_{t}(\theta_{0}),\log(\sigma_t^2))'\in\R^5$. It satisfies the relation
$$
Y_{t+1}=\left(\begin{matrix}V_t&0&0&0&0\\
0&V_t&0&0&1\\
0&0&V_t&0&0\\
0&0&0&V_t&0\\
0&0&0&0&\beta_0 \end{matrix}\right)Y_t+\left(\begin{matrix}1\\0\\Z_t\\|Z_t|\\\alpha_0+\gamma_0Z_t+\delta_0|Z_t|\end{matrix}\right)=:\Gamma_tY_t+R_t,\qquad \forall t\in\Z.
$$
Thus $(Y_t)$ is a random coefficients autoregressive processsatisfying the assumptions of Theorem 4 (a) of \cite{tweedie:1998} iff $\E V_t^2<1$. By a direct application of the Theorem 4 (a) of \cite{tweedie:1998} we obtain that the process $(Y_t)$ is second order stationary and thus the existence of the matrix $\bfB=\E [\nabla g_{t}(\theta_{0})(\nabla g_{t}(\theta_{0}))^{T} ]$.\\

Let us prove that $\bfB$ is invertible.  By classical arguments, it is sufficient to prove that the components of the vector $\nabla g_0(\theta_0)$ are linearly independent. It is the case in the AGARCH(1,1) model as soon as the density of $Z_0$ is not concentrated on two points, se Lemma 8.2 of \cite{straumann:mikosch:2006}. Thus $\bfB$ is an invertible matrix.\\

Finally,  we have the identity ${\bf I}=2^{-1}\bfB$ as
$$
{\bf I}  =2^{-1}\E\left[(\nabla g_{t}(\theta_{0})(\nabla g_{t}(\theta_{0}))^{T}Z_{0}^{2}+\mathbb Hg_{t}(\theta_{0})(1-Z_{0}^{2})\right]
   =2^{-1}\E [\nabla g_{t}(\theta_{0})(\nabla g_{t}(\theta_{0}))^{T} ]=2^{-1}\bfB.
$$
We also have the identity 
${\bf J}=4^{-1}(\E Z_0^4-1)\bfB$ because 
\begin{multline*}
{\bf J}  =\E\left[4^{-1}\E\left[\nabla g_{t}(\theta_{0})(\nabla g_{t}(\theta_{0}))^{T}(1-\Z_{t}^{2})^{2}\right]|\cF_{t-1}\right]\\
   =4^{-1}\E[(1-\Z_{0}^{2})^{2}]\E[\nabla g_{t}(\theta_{0})(\nabla g_{t}(\theta_{0}))^{T}]=4^{-1}(\E Z_{0}^{4}-1) \bfB.
\end{multline*}
Thus, using the identity ${\bf \Sigma}=(\E Z_0^4-1)\bfB^{-1}$, this matrix exists and is  invertible. The lemma is proved.

\subsection{Proof of Lemma \ref{lem:der}}
The proof of the existence of the derivatives of the process $(g_t)$ requires a refinement of Theorem 3.1 of \cite{bougerol:1993}.  We give this new result in full generality when the SRE is on a Polish space $(E,d)$. 
A map $f: E\to E$ is a Lipschitz map if
$\Lambda(f)=\sup_{(x,y)\in E^2}d(f(x),f(y))/d(x,y)$
is finite.     The regularity property we study is the following one: let $(U_t)_{t\ge 0}$ be a sequence of  non negative r.v.
\begin{description}
\item[(EAS)] For any non negative sequence $W_t\xrightarrow{e.a.s.}0$ the series $(W_tU_t)$ converges a.s.
\end{description}
Remark that {\bf (EAS)} is implied for stationary sequences by a condition of log-moment of order 1 as in \cite{bougerol:1993} (it is a straightforward application of the Borel Cantelli Lemma also used in \cite{berkes:horvath:kokoszka:2003,straumann:mikosch:2006}). More generally,   {\bf (EAS)} is satisfied for any non necessarily stationary sequence $(Y_t)$ such that the series $(\P(U_t\ge \rho^t))=(P(\log^+U_t\ge t\varepsilon))$ converge for any $0<\rho<1$ and $\varepsilon>0$. Remark also that {\bf (EAS)} is also automatically satisfied for any sequence $(U_t)$ such that $U_t\xrightarrow{{e.a.s.}}0$. The property {\bf (EAS)} is very useful in our context as it also satisfied for any solution of a convergent SRE under minimal additional assumptions: finite log-moments of order $p>2$ and the geometric strongly mixing condition
\begin{Theo} \label{th:bo}
Let $(\Psi_{t})$ be a stationary ergodic sequence of Lipschitz maps from $E$ to $E$ that is also strongly mixing with geometric rate. Assume that 
$ (d(\Psi_t(x),x)) $ satisfies {\bf (EAS)} for some $x\in E$,  $\E[(\log^{+}\Lambda(\Psi_{0}))^p]<\infty$ for some $p>2$ and 
\begin{equation}\label{tl}
\E[\log\Lambda(\Psi_{0}^{(r)})]=\E[\log\Lambda(\Psi_{0}\circ\cdots\circ\Psi_{-r+1})]<0 \mbox{  for some }r\ge 1.
\end{equation}
Then the SRE $
Y_{t+1}=\Psi_{t}(Y_{t})$,  $t\in\Z$, converges: it admits a unique stationary solution $(Y_{t})_{t\in\Z}$ which is
ergodic and for any $y\in E$
$$
Y_{t}=\lim_{m\rightarrow\infty}\Psi_{t}\circ\cdots\circ\Psi_{t-m}(y),\quad t\in\Z.
$$
The 
$Y_{t}$ are measurable with respect to the $\sigma(\Psi_{t-k}, k\geq0)$ and
$$
d(\hat Y_{t},Y_{t})\xrightarrow{{e.a.s.}}0,\quad t\rightarrow\infty
$$
for $(\tilde Y_t)$ satisfying $\tilde{Y}_{t+1}=\Psi_t(\hat Y_{t})$, $t\ge0$, and $\hat Y_0=y$ for any $y\in E$. Moreover $d(Y_t,y)$ satisfies {\bf (EAS)} for any $y\in E$. \end{Theo}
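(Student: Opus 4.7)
The plan is to follow Bougerol's original SRE convergence argument, using the (EAS) hypothesis as a substitute for his usual log-moment condition on $d(\Psi_0(x), x)$, and then to derive the additional conclusion---that $d(Y_t, y)$ itself satisfies (EAS)---from the $p$th log-moment bound on $\Lambda(\Psi_0)$ together with the geometric mixing.

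First I would show $\Lambda(\Psi_0^{(m)}) \xrightarrow{e.a.s.} 0$. The moment condition $\E(\log^+\Lambda(\Psi_0))^p < \infty$ in particular gives $\E\log^+\Lambda(\Psi_0) < \infty$, so Kingman's subadditive ergodic theorem applies to the cocycle $(\log\Lambda(\Psi_0^{(m)}))$, and the Lyapunov condition \eqref{tl} gives a strictly negative limit $\gamma \le r^{-1}\E[\log\Lambda(\Psi_0^{(r)})] < 0$. Next, following the backward-iterate construction, set $y_m = \Psi_t\circ\cdots\circ\Psi_{t-m+1}(y)$; the telescoping estimate reads
\[
d(y_{m+1}, y_m) \le \Lambda(\Psi_t^{(m)})\, d(\Psi_{t-m}(y), y),
\]
with the first factor e.a.s.\ to $0$ in $m$ and the second factor a stationary sequence. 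A short triangle-inequality calculation extends (EAS) from the specific basepoint $x$ to any $y \in E$ (the $\log^+\Lambda$ moment gives (EAS) for $(\Lambda(\Psi_t))$, which handles the cross term). Applying (EAS) to the product, the telescoping series converges a.s., hence $(y_m)$ is Cauchy and $Y_t := \lim_m y_m$ is well defined. Standard Bougerol-type arguments---the Lipschitz bound $d(\lim_m y_m, \lim_m y_m') \le \Lambda(\Psi_t^{(m)}) d(y, y') \to 0$, the shift-covariance of the construction, measurability with respect to $\sigma(\Psi_{t-k}, k \ge 0)$, and the forward-approximation bound $d(\tilde Y_t, Y_t) \le \Lambda(\Psi_{t-1}\circ\cdots\circ\Psi_0) d(y, Y_0)$---then yield independence of the limit from $y$, stationarity, ergodicity, and the e.a.s.\ convergence of any forward approximation.

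The novel step is to prove that the stationary sequence $(d(Y_t, y))$ satisfies (EAS). Since the sequence is stationary, (EAS) will follow from the Borel--Cantelli argument recalled in the paper once we have $\E\log^+ d(Y_0, y) < \infty$, and by Jensen it is enough to show $\E\, d(Y_0, y)^\alpha < \infty$ for some $\alpha \in (0,1]$. Using the telescoping bound $d(Y_0, y) \le \sum_{m \ge 0}\Lambda(\Psi_0^{(m)})\, d(\Psi_{-m}(y), y)$ and the subadditivity $(u+v)^\alpha \le u^\alpha + v^\alpha$, matters reduce to summability of $\E[\Lambda(\Psi_0^{(m)})^\alpha\, d(\Psi_{-m}(y), y)^\alpha]$. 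For small enough $\alpha > 0$, the assumption $p > 2$ together with \eqref{tl} allows a stretched-exponential moment bound $\E\Lambda(\Psi_0^{(m)})^\alpha \le C\rho^m$ with $\rho \in (0,1)$, while stationarity keeps $\E\, d(\Psi_{-m}(y), y)^{\alpha'}$ bounded in $m$. A Davydov--Rio covariance inequality, applied with the mixing coefficients $\alpha_r \le b a^r$ and the $p$th log-moment on $\Lambda(\Psi_0)$, controls the cross-dependence between the two factors and delivers the required summability.

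The main obstacle will be the moment bound $\E\Lambda(\Psi_0^{(m)})^\alpha \le C\rho^m$: Kingman's theorem provides almost-sure negative drift but not automatic $L^\alpha$ exponential decay of the exponent. This is precisely where the strengthened hypotheses $p > 2$ and geometric mixing are invoked; together they provide the concentration needed to upgrade the almost-sure exponential rate to an $L^\alpha$ exponential rate for sufficiently small $\alpha$, after which the covariance inequality closes the estimate. Without either strengthening, the product $\Lambda(\Psi_0^{(m)})^\alpha d(\Psi_{-m}(y),y)^\alpha$ could retain a non-summable residual correlation and the (EAS) conclusion for $d(Y_t, y)$ could fail.
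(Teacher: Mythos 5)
Your first part (existence, uniqueness, measurability, e.a.s.\ convergence of forward iterates) matches the paper, which likewise observes that Bougerol's log-moment condition on $d(\Psi_0(x),x)$ enters his proof only through the (EAS)-type summability and can therefore be replaced by the (EAS) hypothesis verbatim. The problem is the novel step. Your route to ``$d(Y_t,y)$ satisfies (EAS)'' goes through $\E\, d(Y_0,y)^{\alpha}<\infty$, which requires two moment bounds that the hypotheses do not supply. First, $\E\,\Lambda(\Psi_0^{(m)})^{\alpha}\le C\rho^{m}$ (or even its finiteness for a single $m$) cannot be extracted from $\E[(\log^{+}\Lambda(\Psi_0))^{p}]<\infty$: this hypothesis only controls tails of $\Lambda(\Psi_0)$ at the level $\P(\Lambda(\Psi_0)>u)=O((\log u)^{-p})$, which is compatible with $\E\,\Lambda(\Psi_0)^{\alpha}=\infty$ for every $\alpha>0$; passing from a polynomial moment of $\log^{+}\Lambda$ to a positive-power moment of $\Lambda$ is an exponential-moment upgrade that no amount of mixing or concentration can deliver. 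Second, you need $\E\, d(\Psi_{-m}(y),y)^{\alpha'}<\infty$, but the only assumption on the offsets is that $(d(\Psi_t(x),x))$ satisfies (EAS), which implies no positive-power moment and not even $\E\log^{+}d(\Psi_0(x),x)<\infty$ --- indeed the paper introduces this theorem precisely because that log-moment is hard to verify for the derivative SREs, so a proof that reinstates (let alone strengthens) it defeats the purpose.

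The paper's proof stays entirely on the logarithmic scale and never produces a moment of $d(Y_0,y)$. It bounds the forward iterates by the telescoping sum $d(\hat Y_t,y)\le\sum_{j\le t}\Lambda(\Psi_j^{(j-1)})\,d(\Psi_1(y),y)$, writes the partial products as $\sup_{j\le t}\Lambda(\Psi_j^{(j-1)})a^{-j/r}$ times a summable geometric factor, and verifies (EAS) for that supremum directly from its definition: by subadditivity of $\log\Lambda$ the relevant tail probability reduces to a maximal deviation probability for the partial sums of $\log\Lambda(\Psi_{jr}^{(r)})$ below their negative drift, and the series of these probabilities converges by Shao's complete-convergence theorem for $\alpha$-mixing sequences --- this is exactly where $p>2$ and the geometric mixing rate are consumed. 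The conclusion for $d(Y_t,y)$ then follows from $d(Y_t,y)\le d(Y_t,\hat Y_t)+d(\hat Y_t,y)$ and the fact that (EAS) is preserved under sums and under e.a.s.\ convergence to $0$. If you want to salvage your argument you must replace the moment bound $\E\,\Lambda(\Psi_0^{(m)})^{\alpha}\le C\rho^{m}$ by a probability bound of the form $\sum_t\P\bigl(\sup_{j\le t}\log\Lambda(\Psi_j^{(j-1)})-j\log a^{1/r}\ge t\varepsilon\bigr)<\infty$, which is the paper's route.
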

\begin{proof}  Note that the proof of the existence of the stationary solution is due to Elton \cite{elton:1990}. That the approximation scheme is e.a.s. convergent is due to Bougerol \cite{bougerol:1993}. Both results hold under the assumption that the Lipschitz coefficients have a finite log-moment of order 1. A careful look at the proof of  Theorem 3.1 of \cite{bougerol:1993} shows that the condition $\E[\log^+d(\Psi_0(x),x)]<\infty$ is only used there to assert {\bf (EAS)} on $ (d(\Psi_t(x),x)) $. Thus, the first assertions follow the classical arguments developed in \cite{elton:1990,bougerol:1993}. It remain to prove the {\bf (EAS)} property only.\\

First notice that $(d(\hat Y_t,Y_t))$ satisfies {\bf (EAS)} because $d(\hat Y_{t},Y_{t})\xrightarrow{{e.a.s.}}0$.  Let us show that $d(\hat Y_t,y)$ satisfies {\bf (EAS)} when $\hat Y_0=y$. Fix $K<0$ such that $\E[\log\Lambda(\Psi_0^{(r)})\vee K]\le \log a$ for some $0<a<1$. Then 
$$
d(\hat Y_t,y)\le   \sum_{j=1}^td(\hat Y_j,\hat Y_{j-1})\le   \sum_{j=1}^t\Lambda(\Psi_j^{(j-1)})d(\Psi_{1}(y),y)\le \frac{ \sup_{1\le j\le t}\Lambda(\Psi_j^{(j-1)})a^{-j/r} }{1-a^{1/r}}d(\Psi_{1}(y),y).
$$
Let us prove that $\sup_{1\le j\le t}\Lambda(\Psi_j^{(j-1)})a^{-j/r}$ satisfies {\bf (EAS)}. It is implied by the Borel-Cantelli Lemma from the convergence of the series
$$
\Big(\P\Big(\sup_{1\le j\le t}\log\Lambda(\Psi_j^{(j-1)})- j\log(a^{1/r})\ge  t\varepsilon\Big)\Big).
$$ 
By subadditivity, assuming that $t/r\in\N$ for convenience,  we have 
$$
\P\Big(\sup_{1\le j\le t}\log\Lambda(\Psi_j^{(j-1)})- j\log(a^{1/r})\ge  t\varepsilon\Big)\le\P\Big(\sup_{1\le j\le t/r}\sum_{j=1}^{t/r}\log\Lambda(\Psi_{jr}^{(r)})- \frac tr\log(a)\ge  t\varepsilon\Big).$$
This series converges by an application of Theorem 1 in \cite{shao:1993} as $\log(\Lambda(\Psi_t))$ has a finite moment of order $p>2$ and is strongly mixing with geometric rate. Finally, for any non negative sequence $W_t\xrightarrow{e.a.s.}0$  the series $(W_t \sup_{1\le j\le t}\Lambda(\Psi_j^{(j-1)})a^{-j/r})$ converges and the desired result follows.
\end{proof}

We are now ready to state the following refinement of the theorem 2.10 of \cite{straumann:mikosch:2006} used to prove that the functions $\hat g_t$ and $g_t$ are twice continuously differentiable in a neighborhood of $\theta_0$:
\begin{Theo}\label{th:pert}
Let $B$ be a separable Banach space and $(\Psi_t)$ be a stationary ergodic sequence of Lipschitz maps from $B$ into $B$ that is strongly mixing with geometric rate. Assume that 
\begin{description}
\item[S]   $( \|\Psi_t(0)\|)$ satisfies {\bf (EAS)},   $\E[(\log^+ \Lambda(\Psi_0))^p]<\infty$ for some $p>2$  and  $\E[\log\Lambda(\Psi_0^{(r)})]<0$ for some $r\ge 1$.
\end{description} 
Let $(\hat\Psi_t)_{t\in\N}$ be a sequence of Lipschitz maps such that 
\begin{description}
\item[S'] $\|\hat \Psi_t(0)-\Psi_t(0)\|\xrightarrow{e.a.s.}0$ and $\Lambda(\hat \Psi_t-\Psi_t) \xrightarrow{e.a.s.}0$ as $t\to\infty$.
\end{description}
Then the unique stationary solution $Y_t$ of the SRE $\,Y_{t+1}=  \Psi_t(  Y_t)$, $t\in\Z$, exists, $(\|Y_t\|)$ satisfies {\bf (EAS)} and for every solution $(\hat Y_t)_{t\in\N}$ of the perturbed SRE $\hat Y_{t+1}=\hat \Psi_t(\hat Y_t)$, $t\ge 0$, we have $\|\hat Y_t-Y_t\|\xrightarrow{e.a.s.}0$ regardless the initial value $\hat Y_0\in B$.
\end{Theo}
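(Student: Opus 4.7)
My plan is to invoke Theorem \ref{th:bo} for the existence of $(Y_t)$ and the (EAS) control of $(\|Y_t\|)$, and then to establish $\|\hat Y_t-Y_t\|\xrightarrow{{e.a.s.}}0$ through a random linear recursion combined with a block subsampling argument.

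The assumption (S) is exactly what Theorem \ref{th:bo} requires on $(B,\|\cdot\|)$ with the distinguished point $x=0$ (since $d(\Psi_t(0),0)=\|\Psi_t(0)\|$ satisfies (EAS) by (S)). Thus the unique stationary ergodic solution $(Y_t)$ exists and $(\|Y_t\|)$ satisfies (EAS). Testing the definition of (EAS) against $W_t=e^{-\eta t}$ for arbitrary $\eta>0$ yields $\sum_t e^{-\eta t}\|Y_t\|<\infty$ a.s., hence $\|Y_t\|=o(e^{\eta t})$ a.s.\ for every $\eta>0$; the sequence $(\|Y_t\|)$ is therefore at most subexponential almost surely.

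For the perturbation, write $D_t:=\|\hat Y_t-Y_t\|$ and split
\[
\hat\Psi_t(\hat Y_t)-\Psi_t(Y_t)=[\hat\Psi_t-\Psi_t](\hat Y_t)+[\Psi_t(\hat Y_t)-\Psi_t(Y_t)].
\]
Combined with $\|(\hat\Psi_t-\Psi_t)(x)\|\le\|\hat\Psi_t(0)-\Psi_t(0)\|+\Lambda(\hat\Psi_t-\Psi_t)\|x\|$ and $\|\hat Y_t\|\le D_t+\|Y_t\|$, this gives the random linear recursion
\[
D_{t+1}\le A_tD_t+B_t,\qquad A_t:=\Lambda(\Psi_t)+\Lambda(\hat\Psi_t-\Psi_t),\ B_t:=\|\hat\Psi_t(0)-\Psi_t(0)\|+\Lambda(\hat\Psi_t-\Psi_t)\|Y_t\|.
\]
By (S'), the first summand of $B_t$ is e.a.s.\ small, and combining the exponential smallness of $\Lambda(\hat\Psi_t-\Psi_t)$ from (S') with the subexponential growth of $\|Y_t\|$ established above, so is the second; hence $B_t\xrightarrow{{e.a.s.}}0$.

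Because only the block Lyapunov condition $\E[\log\Lambda(\Psi_0^{(r)})]<0$ is assumed, one cannot control $\prod_{k\le t}\Lambda(\Psi_k)$ directly. I therefore pass to the subsampled SRE $Y_{(n+1)r}=\Psi_n^{(r)}(Y_{nr})$, driven by the stationary ergodic strongly mixing sequence $(\Psi_n^{(r)})_{n}$ with single-step Lyapunov exponent strictly negative and log-moment of order $p>2$. A telescoping decomposition
\[
\hat\Psi_n^{(r)}(x)-\Psi_n^{(r)}(x)=\sum_{j=0}^{r-1}\hat\Psi_{(n+1)r-1}\circ\cdots\circ\hat\Psi_{nr+j+1}\circ(\hat\Psi_{nr+j}-\Psi_{nr+j})\circ\Psi_{nr+j-1}\circ\cdots\circ\Psi_{nr}(x),
\]
applied at $x=Y_{nr}$ (of subexponential norm), produces a block perturbation $\tilde B_n\xrightarrow{{e.a.s.}}0$. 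Applying Theorem \ref{th:bo} to the block recursion $D_{(n+1)r}\le\Lambda(\Psi_n^{(r)})D_{nr}+\tilde B_n$ then gives $D_{nr}\xrightarrow{{e.a.s.}}0$, and since within each block $D_t$ grows by at most a subexponential factor, $D_t\xrightarrow{{e.a.s.}}0$ follows. The main obstacle is this telescoping step: the accumulated Lipschitz factors $\prod_j\Lambda(\hat\Psi_j)$ over the length-$r$ block must be shown to stay subexponential in $n$ so that the e.a.s.\ decay of each single-step difference $\hat\Psi_{nr+j}-\Psi_{nr+j}$ dominates; this uses $\Lambda(\hat\Psi_j)\le\Lambda(\Psi_j)+\Lambda(\hat\Psi_j-\Psi_j)$ together with the log-moment of order $p>2$ and the geometric strong mixing, in the same Borel--Cantelli spirit as in Theorem \ref{th:bo}.
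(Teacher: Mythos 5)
Your argument is correct and follows essentially the same route as the paper, whose proof simply observes that the proof of Theorem 2.10 of Straumann and Mikosch (2006) goes through verbatim once the log-moment conditions on $\|\Psi_0(0)\|$ and $\|Y_0\|$ are replaced by the (EAS) property (supplied for $(\|Y_t\|)$ by Theorem \ref{th:bo}); the two places you identify where these conditions enter — the perturbation term $B_t$ and the block telescoping with subexponential Lipschitz products — are exactly the series of the form $(W_t\|\Psi_t(0)\|)$ and $(W_t'\|Y_t\|)$ that the paper points to. The only slip is that your telescoping display is not an identity as written (a difference of compositions is not a composition with the difference inserted in the middle); it should be the usual sum of differences of compositions, bounded in norm by $\sum_{j}\prod_{i>j}\Lambda(\hat\Psi_{nr+i})\,\|(\hat\Psi_{nr+j}-\Psi_{nr+j})(Y_{nr+j})\|$, which is what your subsequent discussion in fact uses.
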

{\em Proof of Theorem \ref{th:pert}.}\,
We assume the same conditions than in the theorem 2.10 of \cite{straumann:mikosch:2006} except that the conditions $\E[\log^+\|\Psi_0(0)\|]<\infty$ and $\E[\log^+\|Y_0\|]<\infty$ do not hold. Remark that we introduce this new approach based on the {\bf (EAS)} property due to the difficulty to check this last condition on the derivatives of the SRE \eqref{eq:egarchinv}.  A careful look at the proof of Theorem 2.10  in  \cite{straumann:mikosch:2006} shows that these conditions are used for the convergence of series of the form $(W_t\|\Psi_t(0)\|)$ and $(W_t'\|Y_t\|)$  converge for some $(W_t)$ and $(W_t')$ such that $W_t,W_t'\xrightarrow{e.a.s.}0$. The first series converges by assumption, the second one converges as $(\|Y_t\|)$ satisfies {\bf (EAS)} from the last assertion of Theorem \ref{th:bo}.$\hfill\square$\\

Let us come back to the proof of Lemma \ref{lem:der}. First note that as $(\sigma_t^2)$ is strongly mixing with geometric rate it is also the case of the process 
$$
(\phi_t(x,\theta))=(\alpha+\beta x+(\gamma X_{t}+\delta |X_{t} |)\exp(-x/2)).
$$  
Deriving the SRE \eqref{eq:egarch}, we obtain the new linear SRE
\begin{equation}\label{sredg}
\nabla\hat g_{t+1} (\theta)= \phi_t'(\hat g_t(\theta),\theta)\nabla \hat g_t(\theta)+\nabla_{\theta} \phi_t(\hat g_t(\theta),\theta),\qquad  t\ge 0
\end{equation}
where 
\begin{align*}
\phi_t'(x,\theta)&=\beta-2^{-1}(\gamma X_t+\delta|X_t|)\exp(-x/2),\\\nabla_{\theta} \phi_t(x,\theta)&=(1,x,X_t\exp(-x/2),|X_t|\exp(-x/2))'.
\end{align*}
Note that both functions $\phi_t'(x,\theta)$ and $\nabla_{\theta} \phi_t(x,\theta)$ are continuous in $(x,\theta)$.
Under {\bf (CI)}, we derive from an application of Theorem \ref{th:cont} the existence of a compact neighborhood $\mathcal V(\theta_0)\subset \stackrel{\circ}{\Theta}$ of $\theta_0$ such that $\E[\log \|\Lambda(\phi_0^{(r)}) \|_{\mathcal V(\theta_0)}]<0$ for some $r\ge 1$. 
The SRE \eqref{sredg} is a linear perturbed SRE $\nabla\hat g_{t+1}=\hat\Phi_t'(\nabla\hat g_{t})$ on the Banach space of continuous functions on $\mathcal V(\theta_0)$. The sequence $(\hat\Phi_t')$ is not stationary but it is well approximated by the stationary sequences $(\Phi_t')$ defined by the relation
$$
\Phi_t'(h):=\phi_t'( g_t ,\cdot)h+\nabla_{\theta} \phi_t( g_t ,\cdot), \qquad t\in \Z.$$\

We will apply Theorem \ref{th:pert} on the SREs driven by $(\hat \Phi_t')$ and $(\Phi_t')$ to assert the existence of the first derivatives $(\nabla g_t)$.  Let us check the first condition of the assumption {\bf S} of  Theorem \ref{th:pert}. The series $(\|\Phi_t'(0)\|_{\mathcal V(\theta_0)})=(\| \nabla_{\theta} \phi_t( g_t ,\cdot)\|_{\mathcal V(\theta_0)})$ satisfies {\bf (EAS)} because $\E\log^+ |X_t|<\infty$ and  $(\|\hat g_t\|_{\mathcal V(\theta_0)})$ satisfies {\bf (EAS)} by an application of Theorem \ref{th:bo} on $(g_t)$.  By definition of $\mathcal V(\theta_0)$, as $ \phi_t'(x,\theta)\le \Lambda (\phi_0,\theta)$, we check the last two conditions of the assumption {\bf S} by the continuous invertibility of the model and because $\E(\log^+X_0)^p<\infty$ for $p=8$ as $\E Z_0^4<\infty$.\\

Let us check the first condition of the assumption {\bf S'} of the theorem \ref{th:pert}. As $\phi_t$ is twice continuously differentiable, we have
$$
\|\Phi_t'(0)-\hat\Phi_t'(0)\|=\|\nabla_{\theta} \phi_t( g_t ,\cdot)-\nabla_{\theta} \phi_t( \hat g_t ,\cdot)\|_{\mathcal V(\theta_0)}\le \Big\|\sup_{x\ge c }\nabla_\theta \phi_t'\Big\|_{\mathcal V(\theta_0)}\|  g_t -  \hat g_t  \|_{\mathcal V(\theta_0)}
$$
where $\nabla_\theta \phi_t'(x,\theta)=(0,1,X_t/2\exp(-x/2),|X_t|/2\exp(-x/2))'$ and $c:=\min_{\mathcal V(\theta_0)} \alpha/(1-\beta)$. Remark that $\E[\log^+ \|\sup_{x\ge c }\nabla_\theta \phi_t' \|_{\mathcal V(\theta_0)}]<\infty$ because $\E[\log^+ |X_t|]<\infty$. Thus $$\sum_{t=0}^\infty \P\Big(\Big\|\sup_{x\ge c }\nabla_\theta \phi_t\Big\|_{\mathcal V(\theta_0)}\ge \rho^{-t}\Big)<\infty$$
for any $\rho>1$
and then $\|\Phi_t'(0)-\hat\Phi_t'(0)\|\xrightarrow{e.a.s.}0$  by an application of the Borel-Cantelli Lemma because $\|  g_t -  \hat g_t  \|_{\mathcal V(\theta_0)}\xrightarrow{e.a.s.}0$.\\

To check the second condition of the assumption {\bf S'}, we remark that  as $\phi_t$ is twice continuously differentiable, we also have
$$
\Lambda(\Phi_t'-\hat\Phi_t')\le \|\phi_t'(\hat g_t,\cdot)-\phi_t'( g_t,\cdot)\|_{\mathcal V(\theta_0)}\le \|\sup_{x\ge c }\phi_t''\|_{\mathcal V(\theta_0)}\|  g_t -  \hat g_t  \|_{\mathcal V(\theta_0)}
$$
where $\phi_t''(x,\theta)=4^{-1}(\gamma X_t+\delta|X_t|)\exp(-x/2)$. That $\|\Phi_t'(0)-\hat\Phi_t'(0)\|\xrightarrow{e.a.s.}0$ follows again by an application of the Borel-Cantelli Lemma because $\|  g_t -  \hat g_t  \|_{\mathcal V(\theta_0)}\xrightarrow{e.a.s.}0$ and   $\E[\log^+\|\sup_{x\ge c }\phi_t''\|_{\mathcal V(\theta_0)}]<\infty$ as $\E[\log^+ |X_t|]<\infty$.\\

The existence of the stationary solution $(\nabla g_t)$ of the SRE driven by $(\Phi_t')$ follows by an application of Theorem \ref{th:pert}. It also provides that $(\|\nabla g_t \|_{\mathcal V(\theta_0)})$ satisfies the property {\bf (EAS)} that will be useful to derive the existence of the second derivatives below. This stationary solution $(\nabla g_t)$ is continuous as it is the locally uniform asymptotic law of $\nabla \hat g_t$ that is continuous by construction. \\

Deriving a second time and keeping the same notation as above, we obtain another linear SRE satisfied by $(\mathbb H \hat g_{t+1} (\theta))$ for any $1\le i,j\le d$
\begin{multline}\label{sreddg}
\mathbb H  \hat g_{t+1} (\theta)= \phi_t'(\hat g_t(\theta),\theta)\mathbb H \hat g_t(\theta)+ \phi_t''(\hat g_t(\theta),\theta)\nabla  \hat g_t(\theta)\nabla   \hat g_t(\theta)^T \\
+\nabla_\theta \phi_t'(\hat g_t(\theta),\theta)\nabla \hat g_t(\theta)^T+\nabla \hat g_t(\theta)\nabla_\theta \phi_t'(\hat g_t(\theta),\theta)^T+\mathbb H_{\theta } \phi_t(\hat g_t(\theta),\theta).
\end{multline}
For the EGARCH(1,1) model, $\mathbb H_{\theta } \phi_t(x,\theta)$ is identically null. Thus we consider the perturbed SRE
$$
\hat\Phi_t''(h):=\phi_t''( \hat g_t ,\cdot)h+ \phi_t''(  \hat g_t ,\cdot)\nabla   \hat g_t \nabla    g_t^T  
+\nabla_\theta \phi_t'(\hat  g_t ,\cdot)\nabla  g_t^T +\nabla \hat   g_t\nabla_\theta \phi_t'(\hat  g_t ,\cdot)^T, \qquad t\ge 0.
$$
We can apply Theorem \ref{th:pert} on this perturbed SRE and the corresponding stationary SRE
$$
\Phi_t''(h):=\phi_t''( g_t ,\cdot)h+ \phi_t''(  g_t ,\cdot)\nabla   g_t \nabla    g_t^T  
+\nabla_\theta \phi_t'( g_t ,\cdot)\nabla  g_t^T +\nabla   g_t\nabla_\theta \phi_t'( g_t ,\cdot)^T,\qquad t\in\Z.
$$
The details of the proof are omitted as they are similar than those used above on the first derivative. Note that the property {\bf (EAS)} on  $(\|\nabla g_t \|_{\mathcal V(\theta_0)})$ is required to check the first condition of the assumption {\bf S} of Theorems  \ref{th:pert} applied to $\Phi_t''(0)$.

\subsection{Proof of Lemma \ref{lem:appr}}

Let us fix $\mathcal V(\theta_0)$ as in the proof of Lemma \ref{lem:der} such that $g_t$ is twice continuously invertible on $\mathcal V(\theta_0)$ and such that the uniform  log moments on the derivatives exist. Because $\|\tilde\theta_t-\theta_0\|\xrightarrow{a.s.}0$ there exists some random integer $M\ge 1$ such that $\tilde \theta_t\in \mathcal V(\theta_0)$ for any $t\ge M$. Consider the SRE 
$$
\nabla g_{t+1} (\theta)= \phi_t'( g_t(\theta),\theta)\nabla  g_t(\theta)+\nabla_{\theta} \phi_t( g_t(\theta),\theta)\quad \forall t\in \Z
$$
where 
\begin{align*}
\phi_t'(x,\theta)&=\beta-2^{-1}(\gamma Z_t+\delta|Z_t|)\exp(-(x-g_t(\theta_0))/2),\\\nabla_{\theta} \phi_t(x,\theta)&=(1,x,Z_t\exp(-(x-g_t(\theta_0))/2),|Z_t|\exp(-(x-g_t(\theta_0))/2))'.
\end{align*}
As $x\to \exp(-x/2)$ is a Lipschitz continuous function for $x\ge c$, as $\mathcal V(\theta_0)$ is a compact set there exists some $C>0$ such that for all $x\in\mathcal K$, all $\theta\in\mathcal V(\theta_0)$ we have
$$
|\phi_t'(x,\theta)-\phi_t'(g_t(\theta_0),\theta_0)|+\|\nabla_{\theta} \phi_t( x,\theta)-\nabla_{\theta} \phi_t( g_t(\theta_0),\theta_0)\|\le C|Z_t|( \|\theta-\theta_0\|+|x-g_t(\theta_0)|).
$$
Thus,
for any $n\ge t\ge M$, denoting $v_t(\tilde\theta_n)=\|\nabla g_{t+1}(\hat \theta_n)-\nabla g_{t+1}(\theta_0)\|$ we obtain
\begin{align*}
v_{t+1}(\tilde\theta_n)\le& |\phi_t'(g_t(\theta_0),\theta_0)| v_t(\tilde\theta_n)|+\|\nabla g_t(\hat\theta_n)\||\phi_t'(g_t(\theta_0),\theta_0)-\phi_t'(\hat g_t(\hat \theta_n),\hat \theta_n)|\\
&+\|\nabla_{\theta} \phi_t(\hat g_t(\hat \theta_n), \hat \theta_n)-\nabla_{\theta} \phi_t( g_t(\theta_0),\theta_0)\|\\
\le&  |\phi_t'(g_t(\theta_0),\theta_0)| v_t(\tilde\theta_n)+(\|\nabla g_t\|_{\mathcal V(\theta_0)}+1)C|Z_t|( \|\tilde\theta_n-\theta_0\|+|g_t(\tilde \theta_n)-g_t(\theta_0)|).
\end{align*}
By a recursive argument, we obtain for all $n\ge t\ge M$ 
\begin{multline*}
v_t(\tilde \theta_n)\le \sum_{j=M}^{t-1}\prod_{i=j+1}^{t-1}|\phi_i'(g_i(\theta_0),\theta_0)| (\|\nabla g_t\|_{\mathcal V(\theta_0)}+1)C|Z_t|(\|\tilde\theta_n-\theta_0\|+|g_t(\tilde \theta_n)-g_t(\theta_0)|)\\
+\prod_{i=M}^{t-1}|\phi_i'(g_i(\theta_0),\theta_0)| v_M.
\end{multline*}
That $\prod_{i=j+1}^{t-1}|\phi_i'(g_i(\theta_0),\theta_0)|\xrightarrow{e.a.s.}0$ follows  from the assumption {\bf (MM)}   $\E\phi_i'(g_i(\theta_0),\theta_0)^2<1$ and by using the subadditive ergodic theorem of \cite{kingman:1973} on the logarithms. Thus the last term of the sum converges e.a.s to 0 and the corresponding  Cesaro mean $c_n=n^{-1}\sum_{t=M}^n\prod_{i=M}^{t-1}|\phi_i'(g_i(\theta_0),\theta_0)| v_M$ also converges e.a.s. to $0$.\\

Let us treat the term
$$
\sum_{j=M}^{t-1}\prod_{i=j+1}^{t-1}|\phi_i'(g_i(\theta_0),\theta_0)| (\|\nabla g_t\|_{\mathcal V(\theta_0)}+1)C|Z_t|\|\tilde\theta_n-\theta_0\|.
$$
It is a.s. smaller than 
$$
\|\tilde\theta_n-\theta_0\|\sum_{j=M}^{\infty}\prod_{i=j+1}^{t-1}|\phi_i'(g_i(\theta_0),\theta_0)| (\|\nabla g_t\|_{\mathcal V(\theta_0)}+1)C|Z_t|.
$$
This series converges a.s.  because $(\|\nabla g_t \|_{\mathcal V(\theta_0)})$ satisfies {\bf (EAS)} by an application of Theorem \ref{th:pert}. Thus we obtain  that there exist some random variable $a>0$ such that
$$
n^{-1}\sum_{t=M}^n\sum_{j=M}^{t-1}\prod_{i=j+1}^{t-1}|\phi_i'(g_i(\theta_0),\theta_0)| (\|\nabla g_t\|_{\mathcal V(\theta_0)}+1)C|Z_t|\|\tilde\theta_n-\theta_0\|\le a\|\tilde\theta_n-\theta_0\|.
$$\

Finally, the reminding term of the upper bound is
$$
\sum_{j=M}^{t-1}\prod_{i=j+1}^{t-1}|\phi_i'(g_i(\theta_0),\theta_0)| (\|\nabla g_t\|_{\mathcal V(\theta_0)}+1)C|Z_t||g_t(\tilde \theta_n)-g_t(\theta_0)|.
$$
We treat it as in the proof of Theorem \ref{pr:vf}. Its uniform norm converges a.s.  by an application of similar arguments than above. Thus there exists a random continuous function $b$  satisfying $b(\theta_0)=0$ and
\begin{align*}
n^{-1}\sum_{t=M}^n\sum_{j=M}^{t-1}\prod_{i=j+1}^{t-1}|\phi_i'(g_i(\theta_0),\theta_0)|& (\|\nabla g_t\|_{\mathcal V(\theta_0)}+1)C|Z_t||g_t(\tilde \theta_n)-g_t(\theta_0)|\\&\le \sum_{j=M}^{\infty}\prod_{i=j+1}^{t-1}|\phi_i'(g_i(\theta_0),\theta_0)| (\|\nabla g_t\|_{\mathcal V(\theta_0)}+1)C|Z_t||g_t(\tilde \theta_n)-g_t(\theta_0)|\\
&\le b(\tilde\theta_n).
\end{align*}\

Finally we obtain that $n^{-1}\sum_{t=M}^nv_t(\tilde \theta_n)\le a\|\tilde \theta_n-\theta_0\|+b(\tilde \theta_n)+c_n$. Using this bound and the SRE satisfied by the differences $\mathbb H g_{t}(\tilde \theta_n)-\mathbb H g_t(\theta_0)$, we obtain following similar arguments than above that 
$$
\frac 1n\sum_{t=M}^n\|\mathbb H g_t(\tilde \theta_n)-\mathbb H g_t(\theta_0)\|\le a'\|\tilde \theta_n-\theta_0\|+b'(\tilde \theta_n)+c'_n,
$$
where  $a'$ is a positive r.v., $b'$ is a random continuous function satisfying $b'(\theta_0)=0$ and $c'_n\xrightarrow{e.a.s.}0$. We conclude that, conditionally on any possible value of $M=m$, we have  
$$n^{-1}\sum_{t=m}^n\|\mathbb H g_t(\tilde \theta_n)-\mathbb H g_t(\theta_0)\|\xrightarrow{a.s.}0.
$$
Thus, we obtain that 
\begin{equation}\label{eqas}
\P\Big(\lim_{n\to\infty}\frac1n\sum_{t=M}^n\|\mathbb H g_{t}(\tilde \theta_n)-\mathbb H g_{t}(\theta_0)\|=0 \Big)=1.
\end{equation}

It remains to estimate  
$$
\P\Big(\lim_{n\to\infty}\frac1n\sum_{t=1}^M\|\mathbb H g_{t}(\tilde \theta_n)-\mathbb H g_{t}(\theta_0)\|=0\Big)= \sum_{k=1}^\infty \P\Big(\lim_{n\to\infty}\frac1n\sum_{t=1}^k\|\mathbb H g_{t}(\tilde \theta_n)-\mathbb H g_{t}(\theta_0)\|=0 \Big)
 \P(M=k).
$$By continuity of the second derivative $\mathbb H g_t$  we have that $\mathbb H g_{t} (\tilde \theta_n)\to \mathbb H g_{t} (  \theta_0)$ a.s. We deduce that for  any $ k\ge 1$ 
$$\P\Big(\lim_{n\to\infty}\frac1n\sum_{t=1}^k\|\mathbb H g_{t}(\tilde \theta_n)-\mathbb H g_{t}(\theta_0)\|=0 \Big)=1.
$$
The desired result follows easily combining these two last equations with \eqref{eqas}.

\subsubsection*{Acknowledgments} I would like to thank C. Francq and J.-M. Zako\"ian for helpful discussions and for  pointing out some mistakes in an earlier version.

\end{document}